\theoremstyle{plain}
\newtheorem{thm}{Theorem}[section]
\newtheorem{lemma}[thm]{Lemma}
\newtheorem{prop}[thm]{Proposition}
\newtheorem{cor}[thm]{Corollary}
\theoremstyle{remark}
\theoremstyle{definition}
\newtheorem{defn}[thm]{Definition}
\newtheorem{example}[thm]{Example}
\newcommand{\Z}{\mathbb{Z}}
\newcommand{\Q}{\mathbb{Q}}
\newcommand{\Spec}{\operatorname{Spec}}
\newcommand{\Ext}{\operatorname{Ext}}
\renewcommand{\mod}{\operatorname{mod}}
\newcommand{\Mod}{\operatorname{Mod}}
\newcommand{\End}{\operatorname{End}}
\newcommand{\Id}{\operatorname{Id}}
\newcommand{\Vect}{{\sf{Vect}}}
\renewcommand{\a}{\alpha}
\renewcommand{\b}{\beta}
\newcommand{\Aut}{\operatorname{Aut}}
\newcommand{\Emb}{\operatorname{Emb}}
\renewcommand{\span}{\operatorname{span}}
\newcommand{\im}{\operatorname{im}}
\newcommand{\diag}{\operatorname{diag}}
\begin{document}
\title{Two-sided vector spaces}
\author{Adam Nyman}
\address{Department of Mathematics, University of Montana, Missoula, MT 59812-0864}
\email{NymanA@mso.umt.edu}
\author{Christopher J. Pappacena}
\address{Department of Mathematics, Baylor University, Waco, TX 76798}
\email{Chris\_$\,$Pappacena@baylor.edu}
\keywords{Two-sided vector space, matrix homomorphism, noncommutative vector bundle}
\date{\today}
\thanks{2000 {\it Mathematics Subject Classification. } Primary  15A03, 16D20; Secondary 14A22, 19A49, 19D50}
\thanks{The second author was partially supported by the National Security Agency under grant NSA 032-5529.}

\begin{abstract}
We study the structure of two-sided vector spaces over a perfect field $K$.  In particular, we give a complete characterization of isomorphism classes of simple two-sided vector spaces which are left finite-dimensional. Using this description, we compute the Quillen $K$-theory of the category of left finite-dimensional, two-sided vector spaces over $K$.  We also consider the closely related problem of describing homomorphisms $\phi:K\rightarrow M_n(K)$.
\end{abstract}

\maketitle

\section{Introduction}

Given the central role that vector spaces play in mathematics, it is natural to study two-sided vector spaces; that is, abelian groups $V$ equipped with both a left and right action by a field $K$, subject to the associativity condition $(xv)y=x(vy)$ for $x,y\in K$ and $v\in V$.  When the left and right actions of $K$ on $V$ agree, then $V$ is nothing more than an ordinary $K$-vector space.  In this case, $V$ decomposes into a direct sum of irreducible subspaces, and every irreducible subspace is $1$-dimensional (and hence isomorphic to $K$ as a vector space over $K$).  When the left and right actions of $K$ and $V$ differ, then the structure of $V$ can be much more complicated.  For example, $V$ does not generally decompose into irreducible subspaces.  Furthermore, the distinct irreducible subspaces of $V$ may not be $1$-dimensional or isomorphic to each other.  

Apart from being intrinsically interesting, two-sided vector spaces play an important role in noncommutative algebraic geometry.  In particular, two-sided vector spaces are noncommutative analogues of vector bundles over $\operatorname{Spec }K$.  Noncommutative analogues of vector bundles were defined and used by Van den Bergh \cite{vdb1} to construct noncommutative $\mathbb{P}^{1}$-bundles over commutative schemes.   

The purpose of this paper is to study the structure of two-sided vector spaces over $K$ when $K$ is a perfect field.  In particular, we classify irreducible two-sided vector spaces which are finite-dimensional as ordinary $K$-vector spaces.  We then use our classification to determine the algebraic $K$-theory of the category of all such two-sided vector spaces.  We also give canonical representations for certain two-sided vector spaces, generalizing \cite[Theorem 1.3]{pat1}.  

The structure theory of two-sided vector spaces has important applications to noncommutative algebraic geometry via the theory of noncommutative vector bundles.  Let $S$ and $X$ be commutative schemes and suppose $X$ is an $S$-scheme of finite type.  By an ``$S$-central noncommutative vector bundle over $X$" we mean an $\mathcal{O}_{S}$-central, coherent sheaf $X$-bimodule which is locally free on the right and left \cite[Definition 2.3, p. 440]{vdb1}.  When $S=\Spec k$ and $X=\Spec K$, a sheaf $X$-bimodule which is locally free of finite rank on each side is nothing more than a two-sided $K$-vector space $V$, finite-dimensional on each side, where the left and right actions of $K$ on $V$ may differ.

When $X$ is an integral scheme, any noncommutative vector bundle $\mathcal{E}$ over $X$ localizes to a noncommutative vector bundle ${\mathcal{E}}_{\eta}$ over the generic point $\eta$ of $X$.  If ${\mathcal{O}}_{X}$ acts centrally on $\mathcal{E}$, then ${\mathcal{E}}_{\eta}$ is completely characterized by its dimension over the field of fractions, $k(X)$, of $X$.  In this case, the rank of $\mathcal{E}$ is defined as $\operatorname{dim}_{k(X)}{\mathcal{E}}_{\eta}$.  Since localization is exact, localization induces a map $K_{0}(X) \rightarrow K_{0}(\operatorname{Spec }k(X))$, and the rank of $\mathcal{E}$ can also be defined as the image of the class of $\mathcal{E}$ via this map.

Now suppose $X$ is of finite type over $\Spec k$.  If ${\mathcal{O}}_{X}$ does not act centrally on $\mathcal{E}$, then ${\mathcal{E}}_{\eta}$ will be a two-sided vector space over $k(X)$ whose left and right actions differ.  In this case, ${\mathcal{E}}_{\eta}$ is not completely characterized by its left and right dimension.  However, localization induces a map $K_0^B(X) \rightarrow K_0^B(\Spec k(X))$ where $K_{0}^B(X)$ denotes the Quillen $K$-theory of the category of $k$-central noncommutative vector bundles over $X$ and $K_0^B(\Spec k(X))$ is defined similarly.  It is thus reasonable to define the rank of $\mathcal{E}$ as the image of the class of $\mathcal{E}$ via this map.  If this notion of rank is to be useful we must be able to compute the group $K^B_0(\Spec k(X))$.    

In addition, one can often construct a noncommutative symmetric algebra $\mathcal{A}$ from a noncommutative vector bundle $\mathcal{E}$ \cite[Section 2]{pat1}, \cite[Section 5.1]{vdb2}.  While $\mathcal{A}$ is not generally a sheaf of algebras over $X$, its localization at the generic point $\eta$ of $X$, ${\mathcal{A}}_{\eta}$, is an algebra.  The birational class of the projective bundle associated to $\mathcal{A}$ is determined by the degree zero component of the skew field of fractions of ${\mathcal{A}}_{\eta}$.  Since ${\mathcal{A}}_{\eta}$ is generated by ${\mathcal{E}}_{\eta}$, we see that the birational class of a noncommutative projectivization is governed by a noncommutative vector bundle over $\Spec K(X)$.  

We now summarize the contents of the paper.  In Section 2 we describe some general properties of two-sided vector spaces that we will use in the sequel.  In Section 3 we study simple objects in $\Vect(K)$, the category of two-sided $K$-vector spaces which are left finite-dimensional.  In particular, we parameterize isomorphism classes of simple two-sided vector spaces by orbits of embeddings $\lambda:K \rightarrow \bar{K}$ under the action of left-composition by elements of $\Aut(\bar K/K)$ (Theorem \ref{simple theorem}).  In Section 4, we use results from Section 3 to explicitly describe the Quillen $K$-groups of $\Vect(K)$, denoted $K^B_{i}(K)$ (Theorem \ref{kgroup theorem}), and give a procedure for calculating the ring structure on $K_0^B(K)$. 
    
Finally in Section 5, we study matrix representations of two-sided vector spaces, i.e. homomorphisms $\phi:K \rightarrow M_{n}(K)$.  Specifically, we consider the problem of finding a $P\in GL_n(K)$ such that the homomorphism $P\phi P^{-1}$ has a particularly nice form.  We prove that if every matrix in $\im\phi$ has all of its eigenvalues in $K$, then the triangularized form of $\phi$ can be described in terms of higher derivations on $K$ (Theorem \ref{upper triangular thm}).  We also develop sufficient conditions on a matrix $A$ to ensure the existence of an upper triangular matrix $P\in GL_n(K)$ with $PAP^{-1}$ in Jordan canonical form (Theorem \ref{theorem.jcf}).  Combining these results, we give sufficient conditions that enable us to describe the off diagonal blocks of $P\phi P^{-1}$ (Corollary \ref{jcf cor}).

Throughout the paper, we provide examples of our results.  We reproduce and extend the third case of \cite[Theorem 1.3]{pat1} by describing the structure of $2$ and $3$-dimensional simple two-sided vector spaces when they exist.  When $p\geq 3$ is prime and $K = \Q(\sqrt[p]{2})$, we describe the isomorphism classes of $\Q$-central two-sided $K$-vector spaces.  There are only two, with dimensions $1$ and $p-1$.  We then describe the ring $K_0^B(K)$ via generators and relations.  Finally, we provide an example in Section 5 to show that there exists a field $K$, a homomorphism $\phi:K\rightarrow M_3(K)$, and an element $y\in K$ such that there is no $P\in GL_3(K)$ with $P\phi P^{-1}$ upper triangular and $P\phi(y)P^{-1}$ in Jordan canonical form (Example \ref{example.fail}).  

\subsection*{Acknowledgments} We thank R. Piziak for general help with some of the finer points of linear algebra and matrix theory, and we thank R. Guralnick for informing us of a more general version of Lemma \ref{Piziak lemma} than that which appeared in earlier drafts of this paper.

\section{Preliminaries}
As we mentioned above, $K$ will always denote a perfect field of arbitrary characteristic and $\bar K$ will be a fixed algebraic closure of $K$.  By a \emph{two-sided vector space} we mean a $K$-bimodule $V$ where the left and right actions of $K$ on $V$ do not necessarily coincide.  Except when explicitly stated to the contrary, we shall only consider those two-sided vector spaces whose left dimension is finite, and we use the phrases ``two-sided vector space" and ``bimodule" interchangeably.

Since we shall only consider bimodules $V$ with $_KV$ and $V_K$ both unital, it is easy to see that the prime subfield of $K$ must act centrally on any two-sided vector space.  We shall fix a base field $k\subset K$ and consider only those bimodules $V$ which are centralized by $k$.  Note that we do not assume that $K/k$ is algebraic in general.  While all of the notions that we introduce in this paper will depend on the centralizing subfield $k$, it turns out that $k$ itself will usually not play an important role in any of our results.  In particular we will omit $k$ from our notation.  

Given a $K$-bimodule $V$ and a set of vectors $\{v_i:i\in I\}$, we shall always write $\span\{v_i\}$ to stand for the \emph{left} span of the $v_i$.  In general, $\span\{v_i\}$ will not be a sub-bimodule of $V$.

If $V$ is a two-sided vector space, then right multiplication by $x\in K$ defines an endomorphism $\phi(x)$ of $_KV$, and the right action of $K$ on $V$ is via the $k$-algebra homomorphism $\phi:K\rightarrow \End(_KV)$.  This observation motivates the following definition.

\begin{defn} Let $\phi:K\rightarrow M_n(K)$ be a nonzero homomorphism.  Then we denote by $_1K^n_\phi$ the two-sided vector space of left dimension $n$, where the left action is the usual one and the right action is via $\phi$; that is, 
\begin{equation}x\cdot(v_1,\dots, v_n)=(xv_1,\dots,xv_n),\ \ \ (v_1,\dots, v_n)\cdot x=(v_1,\dots,v_n)\phi(x).\end{equation}
We shall always write scalars as acting to the left of elements of $_1K^n_\phi$ and matrices acting to the right; thus, elements of $K^n$ are written as row vectors and if $v\in K^n$ is an eigenvector for $\phi(x)$ with eigenvalue $\lambda$, we write $v\phi(x)=\lambda v$.
\end{defn}

It is easy to see that, if $V$ is a two-sided vector space and $[K:k]<\infty$, then $\dim{_KV}$ is finite if and only if $\dim V_K$ is finite, and in this case the two dimensions must be equal. Thus, when $[K:k]< \infty$, we may drop subscripts and simply write $\dim V$ for this common dimension.  If $[K:k]$ is infinite, it is no longer true that the finiteness of $\dim{_KV}$ implies the finiteness of $\dim{V_K}$, as the following example shows.

\begin{example} \label{infinite-dimensional example}
Let $K=k(x_{1},x_{2}, \ldots )$, let $\phi:K \rightarrow K$ be the homomorphism defined by $\phi(x_{i})=x_{i+1}$ and let $V = {_1K_\phi}$.  Then the dimension of $_KV$ is $1$, while the dimension of $V_K$ is infinite.
\end{example}

We denote the category of left finite-dimensional two-sided vector spaces by $\Vect(K)$.  Clearly $\Vect(K)$ is a finite-length category. If we write $K^e=K\otimes_kK$ for the enveloping algebra of $K$, then there is a category equivalence between (not necessarily finite-dimensional) $K$-bimodules and (say) left $K^e$-modules.  Under this equivalence, $\Vect(K)$ can be identified as a full subcategory of the category of finite-length $K^e$-modules.  If $[K:k]$ is finite, then $\Vect(K)=K^e\text{-}\mod$, the category of noetherian left $K^e$-modules.  When $K/k$ is infinite, this need no longer hold:  if we define $V={_\phi K_1}$ in the obvious way for the map $\phi$ in Example \ref{infinite-dimensional example}, then $V$ is clearly simple in $K^e\text{-}\Mod$ but is not in $\Vect(K)$.

If $V\in\Vect(K)$ with left dimension equal to $n$, then choosing a left basis for $V$ shows that $V\cong {_1K^n_\phi}$ for some homomorphism $\phi:K\rightarrow M_n(K)$; we shall say that $\phi$ \emph{represents} $V$ in this case.

If $L$ is an extension field of $K$, then of course any matrix over $K$ can be viewed as a matrix over $L$, and a function $\phi:K\rightarrow M_n(K)$ can be viewed as having its image in $M_n(L)$.  If $A,B\in M_n(K)$, then we write $A\sim_LB$ if $A$ and $B$ are similar in $M_n(L)$; that is, if $B=PAP^{-1}$ for some $P\in GL_n(L)$.  Similarly, if $\phi:K\rightarrow M_n(K)$ and $\psi:K\rightarrow M_n(K)$ are functions, we write $\phi\sim_L\psi$ if $\phi(x)=P\psi(x)P^{-1}$ for some $P\in GL_n(L)$.  In either case, if $P$ actually lives in $M_n(K)$, then we simply write $\sim$ for $\sim_K$.  

The following well known result follows readily from the fact that a homomorphism $\phi:K \rightarrow M_n(K)$ restricts to a representation of the group $K^{*}$ of units of $K$.

\begin{lemma} Let $L$ be an extension field of $K$.  $L \otimes_{K} {_1K^n_\phi}\cong{L \otimes_{K} {_1K^n_\psi}}$ as $L \otimes_{K} K^{e}$-modules if and only if $\phi\sim_L\psi$.\label{isomorphism lemma}
\end{lemma}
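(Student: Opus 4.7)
The plan is to unpack the bimodule structure after tensoring and reduce the claim to a direct matrix computation. First I would note that $L\otimes_K K^e = L\otimes_K(K\otimes_k K)\cong L\otimes_k K$, so an $L\otimes_K K^e$-module is the same as an $(L,K)$-bimodule with $k$ acting centrally. Under this identification, $L\otimes_K {_1K^n_\phi}$ becomes $L^n$ equipped with the standard left $L$-action and the right $K$-action via $\phi$ (viewed as landing in $M_n(L)$); call this $(L,K)$-bimodule $_1L^n_\phi$. The lemma is then equivalent to: $_1L^n_\phi\cong {_1L^n_\psi}$ as $(L,K)$-bimodules if and only if there exists $P\in GL_n(L)$ with $\phi(x)=P\psi(x)P^{-1}$ for all $x\in K$.

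For the ``if'' direction, given such a $P$ I would define $T:{_1L^n_\psi}\to {_1L^n_\phi}$ by $T(v)=vP$. This is manifestly left $L$-linear and bijective, and on the right $T(v\cdot_\psi x) = v\psi(x)P = vP\phi(x) = T(v)\cdot_\phi x$, so $T$ is a bimodule isomorphism. For the ``only if'' direction, any left $L$-linear bijection $T:L^n\to L^n$ is given by $T(v)=vP$ for a unique $P\in GL_n(L)$ (we write matrices as acting on the right since scalars act on the left). Imposing right $K$-linearity means $v\phi(x)P = vP\psi(x)$ for all $v\in L^n$ and $x\in K$, and choosing $v$ to range over the standard basis yields $\phi(x)P=P\psi(x)$, i.e.\ $\phi\sim_L\psi$.

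The remark about $K^*$ is what justifies calling this ``well known'': both $\phi$ and $\psi$ restrict to $L$-linear group representations $K^*\to GL_n(L)$, and the computation above is just the standard fact that equivalence of such representations coincides with isomorphism of their module incarnations. No real obstacle arises; the only subtlety is the bookkeeping in the identification $L\otimes_K{_1K^n_\phi}\cong {_1L^n_\phi}$ and the convention that scalars act on the left while matrices act on the right, after which the ``iff'' is a one-line matrix manipulation.
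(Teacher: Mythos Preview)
Your argument is correct and is precisely the elaboration of the paper's one-line justification (the paper does not write out a proof; it simply declares the result well known and points to the restriction $K^*\to GL_n(L)$). One small bookkeeping slip in the ``if'' direction: from $\phi(x)=P\psi(x)P^{-1}$ you get $\phi(x)P=P\psi(x)$, not $\psi(x)P=P\phi(x)$, so your map $T(v)=vP$ should be taken from $_1L^n_\phi$ to $_1L^n_\psi$ (or replace $P$ by $P^{-1}$); with that trivial correction the computation goes through exactly as you wrote.
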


The next result is a special case of the Noether-Deuring Theorem \cite[Exercise 6, p. 139]{cr}.  

\begin{lemma} Let $L$ be an extension field of $K$, and let $A,B\in M_n(K)$.  If $A\sim_LB$, then $A\sim B$.  Similarly, if $\phi:K\rightarrow M_n(K)$ and $\psi:K\rightarrow M_n(K)$ are functions with $\phi\sim_L\psi$, then $\phi\sim \psi$.
\label{Piziak lemma}
\end{lemma}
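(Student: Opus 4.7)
The plan is to handle the two statements by separate but related arguments: the matrix statement by the classical theory of rational canonical form, and the representation statement by invoking the Noether-Deuring theorem cited in the preceding paragraph, translated through Lemma \ref{isomorphism lemma}.

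For the first statement, I would recall that the similarity class of a matrix $A\in M_n(K)$ is a complete invariant encoded by the sequence of invariant factors of $A$, extracted from a Smith normal form of $xI-A\in M_n(K[x])$. These invariant factors are quotients of successive determinantal divisors $d_k(A)$, where $d_k(A)$ is the greatest common divisor of all $k\times k$ minors of $xI-A$. The key observation is that these GCDs are unchanged under the embedding $K[x]\hookrightarrow L[x]$, because a greatest common divisor of polynomials in $K[x]$ computed in $K[x]$ agrees with the one computed in $L[x]$ (this is an elementary consequence of the Euclidean algorithm, or alternatively of Gauss's lemma). Hence $A$ and $B$ share invariant factors over $K$ if and only if they share them over $L$, and the two notions of similarity coincide.

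For the second statement, the hypothesis cannot be addressed one $x$ at a time from the matrix version, since we are given a single $P\in GL_n(L)$ simultaneously conjugating $\phi(x)$ to $\psi(x)$ for every $x\in K$, and we must produce a single $Q\in GL_n(K)$ doing the same. I would instead rephrase the hypothesis via Lemma \ref{isomorphism lemma}: it asserts $L\otimes_K {_1K^n_\phi}\cong L\otimes_K {_1K^n_\psi}$ as $L\otimes_K K^e$-modules, where $K^e$ is regarded as a $K$-algebra through its left-hand tensor factor. The desired conclusion ${_1K^n_\phi}\cong {_1K^n_\psi}$ as $K^e$-modules is then precisely the descent statement of Noether-Deuring applied to the $K$-algebra $R=K^e$ and the two modules at hand; one final application of Lemma \ref{isomorphism lemma}, in the reverse direction, converts this isomorphism back into $\phi\sim\psi$.

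The main obstacle is thus packed entirely into the invocation of Noether-Deuring, whose proof (deferred to \cite{cr}) typically proceeds by first identifying $L\otimes_K\Hom_{K^e}(M,N)$ with $\Hom_{L\otimes_K K^e}(L\otimes_K M,\,L\otimes_K N)$ for finitely generated $M$, and then extracting from an $L\otimes_K K^e$-module isomorphism a bona fide $K^e$-module isomorphism by combining Krull-Schmidt uniqueness with a Zariski density or cardinality argument ensuring that some $K$-linear combination of $K^e$-homomorphisms remains bijective. Since this is exactly the content of the cited exercise, the proof of our lemma reduces to correctly setting up this translation.
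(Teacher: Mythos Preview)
Your proposal is correct and, for the second statement, essentially matches the paper's approach: the paper gives no proof at all, simply declaring the lemma a special case of the Noether--Deuring theorem and citing \cite{cr}. Your translation via Lemma~\ref{isomorphism lemma} makes explicit the module-theoretic reformulation that the paper leaves implicit.

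For the first statement you supply an independent elementary argument via invariant factors, which is more than the paper does; the paper subsumes the matrix case under Noether--Deuring as well (presumably via the $K[x]$-module interpretation of a single matrix). Your route is fine and arguably more transparent for that half.

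One small caveat worth flagging: the lemma as stated allows $\phi,\psi$ to be arbitrary \emph{functions}, not homomorphisms, whereas Lemma~\ref{isomorphism lemma} and the very definition of ${_1K^n_\phi}$ as a $K^e$-module presuppose that $\phi$ is a ring homomorphism. Your reduction therefore literally covers only the homomorphism case---which is, in fact, the only case the paper ever invokes. If you want the full stated generality, the same Noether--Deuring argument goes through after replacing $K^e$ by the free $K$-algebra on the underlying set of $K$ (so that any function $\phi$ endows $K^n$ with a module structure), the modules remaining finite-dimensional over $K$ and hence of finite length.
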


\section{Simple two-sided vector spaces}
The main result of this section is a determination of all of the isomorphism classes of simple two-sided vector spaces. In order to state our classification, we introduce some notation.  We write $\Emb(K)$ for the set of $k$-embeddings of $K$ into $\bar K$, and $G=G(K)$ for the absolute Galois group $\Aut(\bar K/K)$.  (Note that $\bar K/K$ is Galois since $K$ is perfect.)  If $L$ is an intermediate field, then we write $G(L)$ for $\Aut(\bar K/L)$.  

Now, $G$ acts on $\Emb(K)$ by left composition. Given $\lambda\in \Emb(K)$, we denote the orbit of $\lambda$ under this action by $\lambda^G$, and we write $K(\lambda)$ for the composite field $K\vee\im(\lambda)$.  The stabilizer $G_\lambda$ of $\lambda$ under this action is easy to calculate:  $\sigma\lambda=\lambda$ if and only if $\sigma$ fixes $\im(\lambda)$; since $\sigma$ fixes $K$ as well we have that $G_\lambda=G(K(\lambda))$.

\begin{lemma} $[K(\lambda):K]$ is finite if and only if $|\lambda^G|$ is finite, and in this case $|\lambda^G|=[K(\lambda):K]$. \label{finite orbit lemma}
\end{lemma}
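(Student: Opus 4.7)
The plan is to apply the orbit-stabilizer theorem and reduce to the standard Galois-theoretic fact relating the index of the subgroup fixing an intermediate field to the degree of that field.

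First I would invoke orbit-stabilizer: since $G_\lambda = G(K(\lambda))$ has already been identified in the paragraph preceding the lemma, we have $|\lambda^G| = [G : G(K(\lambda))]$. So the assertion reduces to proving that for the intermediate field $L := K(\lambda)$ of $\bar K/K$, $[L:K]$ is finite if and only if $[G : G(L)]$ is, and that the two indices agree in that case. Observe that $L \subseteq \bar K$, so $L/K$ is automatically algebraic (no assumption about $K/k$ being algebraic is needed here), and it is separable because $K$ is perfect.

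Next I would handle the finite case. If $[L:K] = n < \infty$, then restriction $\sigma \mapsto \sigma|_L$ defines a surjection from $G = \Aut(\bar K/K)$ onto the set $\Emb_K(L, \bar K)$ of $K$-embeddings $L \hookrightarrow \bar K$, with fiber over the inclusion equal to $G(L)$. Since any two embeddings in the same coset $\sigma G(L)$ restrict to the same map on $L$ and any two in distinct cosets differ, this gives a bijection between $G/G(L)$ and $\Emb_K(L, \bar K)$. But $L/K$ is finite separable, so $|\Emb_K(L, \bar K)| = [L:K]$, yielding $[G : G(L)] = [L:K]$.

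For the converse, suppose $[L:K] = \infty$. For every positive integer $n$, since $L/K$ is algebraic and infinite, we can choose an intermediate field $K \subseteq L_n \subseteq L$ with $[L_n:K] \geq n$ (for instance, by adjoining finitely many elements of $L$). Then $G(L) \subseteq G(L_n)$, so by the finite case already proved, $[G : G(L)] \geq [G : G(L_n)] = [L_n : K] \geq n$. As $n$ was arbitrary, $[G : G(L)]$ is infinite, and hence so is $|\lambda^G|$.

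There is no real obstacle here; the argument is essentially a bookkeeping exercise around the orbit-stabilizer theorem combined with the counting of $K$-embeddings of a finite separable extension into $\bar K$. The only point to watch is that $K(\lambda) \subseteq \bar K$, which guarantees algebraicity and lets us apply separable Galois theory without any extra hypothesis on $K/k$.
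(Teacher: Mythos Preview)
Your proof is correct and follows the same approach as the paper: orbit-stabilizer reduces the question to $[G:G(K(\lambda))]=[K(\lambda):K]$, which the paper dispatches with the phrase ``basic Galois Theory'' while you spell out the details, including a careful treatment of the infinite case.
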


\begin{proof} By the above, the stabilizer of $\lambda$ is $G(K(\lambda))$.  Thus $|\lambda^G|=[G:G(K(\lambda))]$.  The result now follows by basic Galois Theory. 
\end{proof}

It turns out that we will only be interested in those embeddings $\lambda$ with $\lambda^G$ finite; we denote the set of finite orbits of $\Emb(K)$ under the action of $G$ by $\Lambda(K)$.  The following theorem gives our classification of simple bimodules.

\begin{thm} \label{theorem.main} There is a one-to-one correspondence between isomorphism classes of simples in $\Vect(K)$ and $\Lambda(K)$. Moreover, if $V$ is a simple two-sided vector space corresponding to $\lambda^G\in \Lambda(K)$, then $\dim _KV =|\lambda^G|$ and $\End(V)\cong K(\lambda)$.  \label{simple theorem}
\end{thm}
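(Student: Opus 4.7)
My plan is to establish the bijection explicitly in one direction and via weight-space analysis over $\bar K$ in the other. For each $\lambda\in\Emb(K)$ with $|\lambda^G|$ finite, I realize the putative simple as $V_\lambda := K(\lambda)$ with left $K$-action by the inclusion $K\hookrightarrow K(\lambda)$ and right $K$-action via $\lambda$. Lemma \ref{finite orbit lemma} gives $\dim_K V_\lambda = [K(\lambda):K] = |\lambda^G|$. A sub-bimodule of $V_\lambda$ is a left $K$-subspace closed under right multiplication by $\lambda(K)$, hence a $K(\lambda)$-subspace of the field $K(\lambda)$ (because $K\cdot\lambda(K)=K(\lambda)$), so it is $0$ or $V_\lambda$; a bimodule endomorphism $f$ satisfies $f(ab)=f(a)b$ for $b\in K\cdot\lambda(K)$, using commutativity of $K(\lambda)$, and so $f$ is right multiplication by $f(1)$, with every $\alpha\in K(\lambda)$ conversely defining such an endomorphism. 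Thus $V_\lambda$ is simple and $\End(V_\lambda)\cong K(\lambda)$. If $\mu=\sigma\lambda$ for some $\sigma\in G$, then $\sigma|_{K(\lambda)}$ is a $K$-linear bijection $K(\lambda)\to K(\mu)$ intertwining the right $\lambda$ and right $\mu$ actions, so $V_\lambda\cong V_\mu$, and $\lambda^G\mapsto[V_\lambda]$ is well defined on orbits.

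For the converse, let $V$ be simple, represented by $\phi:K\to M_n(K)$, and set $\bar V := \bar K\otimes_K V\cong\bar K^n$. The Galois group $G$ acts $\bar K$-semilinearly on $\bar V$ with $\bar V^G=V$, commuting with the right action of $\phi(K)$; by Galois descent, any $G$-invariant, $\phi(K)$-invariant $\bar K$-subspace of $\bar V$ descends to a sub-bimodule of $V$, so simplicity of $V$ forbids proper nonzero such subspaces. I first apply this to show every $\phi(x)$ is diagonalizable over $\bar K$: otherwise $\bigoplus_\alpha\ker(\phi(x)-\alpha)\subsetneq\bar V$ is a proper, nonzero, $\phi(K)$-invariant (by commutativity of $\phi(K)$) and $G$-invariant (by $G$-permutation of eigenvalues) subspace. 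Hence $\phi(K)$ is simultaneously diagonalizable and $\bar V=\bigoplus_{\lambda\in S}E_\lambda$ into weight spaces $E_\lambda=\{v:v\phi(x)=\lambda(x)v\ \forall x\in K\}$ indexed by a finite $G$-stable set $S\subseteq\Emb(K)$. Splitting $S$ into more than one orbit and descending would again yield a proper sub-bimodule, so $S=\lambda^G$.

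Next, $\dim_{\bar K}E_\lambda=1$. Since $G_\lambda=G(K(\lambda))$ fixes $E_\lambda$, Galois descent over $\bar K/K(\lambda)$ gives $E_\lambda=\bar K\otimes_{K(\lambda)}F_\lambda$ with $F_\lambda=E_\lambda^{G_\lambda}$ of $K(\lambda)$-dimension $d$. If $d>1$, I pick a proper nonzero $K(\lambda)$-subspace $F_\lambda'\subsetneq F_\lambda$ and set $E_\lambda':=\bar K\otimes_{K(\lambda)}F_\lambda'\subsetneq E_\lambda$; for each $\mu=\sigma\lambda\in\lambda^G$ define $E_\mu':=\sigma(E_\lambda')\subseteq E_\mu$, independent of the choice of $\sigma$ by $G_\lambda$-stability of $F_\lambda'$. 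Then $\bigoplus_\mu E_\mu'$ is a proper nonzero $G$- and $\phi(K)$-invariant subspace of $\bar V$, a contradiction. With $d=1$, $\phi$ is similar over $\bar K$ to the diagonal homomorphism $x\mapsto\diag(\mu(x))_{\mu\in\lambda^G}$; any representation $\psi$ of $V_\lambda$ in a $K$-basis of $K(\lambda)$ has the same $\bar K$-eigenvalues (the Galois conjugates of $\lambda(\cdot)$), so $\phi\sim_{\bar K}\psi$. Lemma \ref{Piziak lemma} upgrades this to $\phi\sim\psi$ over $K$, and Lemma \ref{isomorphism lemma} gives $V\cong V_\lambda$. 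Injectivity of $\lambda^G\mapsto[V_\lambda]$ and the claim $\End(V)\cong K(\lambda)$ follow from the construction. The main obstacle will be the two descent arguments, especially producing the $G$-invariant proper subspace $\bigoplus_\mu E_\mu'$: well-definedness of $E_\mu'$ rests precisely on $G_\lambda$-stability of $F_\lambda'$, which is why one must start from a $K(\lambda)$-subspace of the descent form $F_\lambda$ rather than from an arbitrary $\bar K$-subspace of $E_\lambda$.
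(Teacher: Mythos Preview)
Your argument is correct and takes a genuinely different, more conceptual route than the paper. The paper builds the simple $V(\lambda)$ by writing down an explicit matrix homomorphism $\phi=(\phi_{ij})$ from the structure constants $\beta_{ijk}$ of a basis of $K(\lambda)/K$ (equation~\eqref{phi equation}), then verifies by hand that $\phi$ is a homomorphism, that $V(\lambda)$ is simple, and---in a separate proposition with further explicit matrices $M(p)$---that $\End(V(\lambda))\cong K(\lambda)$. You bypass all of this by realizing the simple directly as the field $K(\lambda)$ with its two $K$-actions; simplicity and the endomorphism ring then drop out immediately from the observation that sub-bimodules and bimodule endomorphisms are exactly $K(\lambda)$-subspaces and $K(\lambda)$-linear maps. (Your $V_\lambda$ and the paper's $V(\lambda)$ are literally the same bimodule: the paper's matrix $\phi$ is just multiplication by $\lambda(x)$ written in the chosen $K$-basis of $K(\lambda)$.)

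On the ``simple $\Rightarrow$ some $V_\lambda$'' direction, the paper argues elementarily: it picks a common eigenvector $v\in\bar K^n$, writes $v=\sum\alpha_i v_i$ in a $K$-basis of $K(\lambda)$, and shows $\span\{v_i\}$ is a sub-bimodule, forcing $\dim_K V=|\lambda^G|$; then Noether--Deuring finishes. Your route replaces this coordinate computation with two Galois-descent steps (first forcing each $\phi(x)$ to be diagonalizable, then forcing the weight spaces to be one-dimensional), after which you also invoke Noether--Deuring. Both arguments are valid; yours is cleaner and makes the role of the $G$-action transparent, while the paper's is more explicit and feeds directly into the worked examples and the matrix computations of Section~5. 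One small point worth stating explicitly in your write-up: the infinite Galois descent you use is legitimate because the semilinear $G$-action on $\bar K^n$ is continuous (every vector has entries in a finite subextension, hence open stabilizer), so $G$-stable $\bar K$-subspaces do descend to $K$-subspaces.
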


To prove the first part of Theorem \ref{simple theorem}, we construct a map from the collection of simple bimodules to $\Lambda(K)$ and show that it gives the desired bijection.  We begin in greater generality, starting with a (not necessarily simple) two-sided vector space $V$ with $V\cong{_1K^n_\phi}$.  Now, $\im\phi$ is a set of pairwise commuting matrices in $M_n(K)$; viewing $\im\phi$ as a subset of $M_n(\bar K)$, we know that there exists a common eigenvector $v\in\bar K^n$ for $\im\phi$.  Define a function $\lambda:K\rightarrow \bar K$ by letting $\lambda(x)$ be the eigenvalue of $\phi(x)$ corresponding to $v$; i.e. $v\phi(x)=\lambda(x)v$.  It is easy to check that $\lambda$ is an embedding of $K$ into $\bar K$, and since $\phi$ is a $k$-algebra homomorphism we have that $\lambda\in\Emb(K)$.     

\begin{lemma} \label{lemma.lambdabound} If $v\in \bar K^n$ is a common eigenvector for $\im\phi$ with corresponding eigenvalue $\lambda$, then $\lambda\in\Lambda(K)$.  Moreover, $|\lambda^G|\leq n$.  
\end{lemma}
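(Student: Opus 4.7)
The plan is to exhibit the orbit $\lambda^G$ as a subset of a finite, $G$-stable set of ``common eigenvalue functions'' for $\im\phi$, and to bound the size of that set by $n$ via a standard independence argument.

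First I would carry out the Galois step: for every $\sigma\in G$, the vector $\sigma(v)\in\bar K^n$, obtained by applying $\sigma$ componentwise to $v$, is again a common eigenvector for $\im\phi$, with eigenvalue function $\sigma\lambda$. This is immediate by applying $\sigma$ componentwise to the identity $v\phi(x)=\lambda(x)v$ and using that each entry of $\phi(x)$ lies in $K$ and is therefore fixed by $\sigma$. Consequently, the set
\[\mathcal L:=\{\mu\in\Emb(K) : \mu \text{ is a common eigenvalue function for } \im\phi\}\]
contains $\lambda$ and is stable under the $G$-action by left composition, so $\lambda^G\subseteq \mathcal L$.

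Next I would show that $|\mathcal L|\le n$. For each $\mu\in\mathcal L$ set
\[E_\mu:=\{w\in\bar K^n : w\phi(x)=\mu(x)w\text{ for all }x\in K\},\]
which is a nonzero $\bar K$-subspace of $\bar K^n$. If $\mu\ne\mu'$ in $\mathcal L$, any nonzero vector in $E_\mu\cap E_{\mu'}$ would force $\mu(x)=\mu'(x)$ for every $x\in K$, so $E_\mu\cap E_{\mu'}=0$. The usual minimal-relation trick --- take a shortest nontrivial $\bar K$-linear relation $\sum w_\mu=0$ with $w_\mu\in E_\mu$, choose $x\in K$ separating two of the $\mu$'s, and subtract $\mu_1(x)\sum w_\mu$ from $\sum w_\mu\phi(x)$ to produce a strictly shorter nontrivial relation --- shows that the sum $\sum_{\mu\in\mathcal L}E_\mu$ is direct in $\bar K^n$. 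Therefore $\sum_\mu \dim E_\mu\le n$, and in particular $|\mathcal L|\le n$.

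Combining the two steps gives $|\lambda^G|\le|\mathcal L|\le n$; in particular $\lambda^G$ is finite, so $\lambda\in\Lambda(K)$, and the quantitative bound $|\lambda^G|\le n$ holds. The only step requiring actual work is the independence of the eigenspaces $E_\mu$, which I expect to be the main obstacle; the Galois step is essentially a tautology once one records that the entries of $\phi(x)$ live in $K$.
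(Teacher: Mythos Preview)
Your proof is correct and follows essentially the same approach as the paper: first the Galois step showing that $\sigma(v)$ is a common eigenvector with eigenvalue $\sigma\lambda$, then a linear-independence argument to bound $|\lambda^G|$ by $n$. The only cosmetic difference is that the paper works directly with the specific Galois-conjugate vectors $\{\sigma_i(v)\}$ and asserts their linear independence, whereas you package the same idea via the eigenspaces $E_\mu$ and the minimal-relation trick; your version is arguably a bit more careful on this point, but the underlying argument is identical.
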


\begin{proof} Note first that if $\sigma\in G$, $\sigma(v)$ is also a common eigenvector of $\im\phi$, with corresponding eigenvalue $\sigma\lambda$.  Indeed, we compute
\begin{equation}\sigma(v)\phi(x)=\sigma(v)\sigma(\phi(x))=\sigma(v\phi(x))=\sigma(\lambda(x)v)=\sigma\lambda(x)\sigma(v).\end{equation}

Now, if $\sigma\lambda\neq\tau\lambda$, then for at least one value of $x\in K$ the vectors $\sigma(v)$ and $\tau(v)$ are eigenvectors for $\phi(x)$ with different eigenvalues; from this it follows that $\sigma(v)$ and $\tau(v)$ are linearly independent.  If $\lambda^G=\{\sigma_i\lambda:i\in I\}$, then $\{\sigma_i(v):i\in I\}$ is a linearly independent subset of $\bar K^n$.  Thus $|\lambda^G|\leq n$ and in particular $\lambda^G\in\Lambda(K)$.
\end{proof}

Viewing $\lambda$ as an embedding of $K$ into $K(\lambda)$, we may without loss of generality assume that the common eigenvector $v$ for $\im\phi$ with eigenvalue $\lambda$ lives in $K(\lambda)^n$.  We now fix notation which will be useful when proving Theorem \ref{theorem.main}.  We let $m=[K(\lambda):K]=|\lambda^G|$ and we fix a basis $\{\a_1,\dots, \a_m\}$ for $K(\lambda)/K$.  We may write 
\begin{equation} \label{eqn.v}
v=\sum_{i=1}^m{\a_iv_i}
\end{equation}
with each $v_i\in K^n$ and 
\[\lambda(x)=\sum_{i=1}^m\lambda_i(x)\a_i\]
where each $\lambda_i:K\rightarrow K$ is an additive function.  Finally, we let $\b_{ijk}$ denote the structure constants for the basis $\{\a_1,\dots, \a_m\}$; that is, 
\[\a_i\a_j=\sum_{k=1}^m\b_{ijk}\a_k.\]

\begin{lemma} \label{lemma.lambdaexact}  In the above notation, $\span\{v_1,\dots,v_m\}$ is a two-sided subspace of $V$.  In particular, if $V$ is simple, $\dim _KV=|\lambda^{G}|$. \end{lemma}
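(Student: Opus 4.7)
The plan is to extract the sub-bimodule structure from the defining eigenvector relation $v\phi(x)=\lambda(x)v$ by expanding both sides in terms of the fixed basis $\{\a_1,\dots,\a_m\}$ of $K(\lambda)/K$ and equating coefficients. On the left, since $v_i\in K^n$ and $\phi(x)\in M_n(K)$, the product $v_i\phi(x)$ lies in $K^n$, so $v\phi(x)=\sum_i \a_i(v_i\phi(x))$ is already in the canonical form $\sum_i\a_i u_i$ with $u_i\in K^n$. On the right, substituting $\lambda(x)=\sum_j\lambda_j(x)\a_j$ and using the structure constants $\a_j\a_i=\sum_k\b_{jik}\a_k$ rewrites $\lambda(x)v$ as $\sum_k\a_k\bigl(\sum_{i,j}\lambda_j(x)\b_{jik}v_i\bigr)$.

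Since $K(\lambda)^n$ is free over $K(\lambda)$ and $\{\a_1,\dots,\a_m\}$ is a $K$-basis of $K(\lambda)$, every element of $K(\lambda)^n$ has a unique expression as $\sum_i\a_i u_i$ with $u_i\in K^n$. Comparing $\a_k$-coefficients therefore yields the identity
\[
v_k\phi(x)=\sum_{i,j}\lambda_j(x)\b_{jik}v_i,
\]
which visibly lies in $\span\{v_1,\dots,v_m\}$. This shows that the right $K$-action (through $\phi$) preserves $\span\{v_1,\dots,v_m\}$; closure under the left action is automatic, so the span is a sub-bimodule of $V$.

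For the second assertion, I would observe that $\span\{v_1,\dots,v_m\}$ is nonzero, because if every $v_i$ were zero then $v=\sum_i\a_iv_i$ would vanish, contradicting the fact that $v$ is an eigenvector. When $V$ is simple, the nonzero sub-bimodule $\span\{v_1,\dots,v_m\}$ must equal $V$, forcing $n=\dim{_K V}\le m=|\lambda^G|$. Combining this with the opposite bound $|\lambda^G|\le n$ furnished by Lemma \ref{lemma.lambdabound} gives $\dim{_K V}=|\lambda^G|$.

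The main (and really only) point requiring care is the coefficient-comparison step: one must be clear that the expansion in the $\a_i$-basis is unique for elements of $K(\lambda)^n$, which is exactly what allows us to read off an equation in $K^n$ rather than in $K(\lambda)^n$. Everything else is a routine bookkeeping computation with the structure constants $\b_{jik}$ and the additive components $\lambda_j$ of $\lambda$.
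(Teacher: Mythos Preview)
Your proof is correct and follows essentially the same approach as the paper: expand both sides of $v\phi(x)=\lambda(x)v$ in the basis $\{\a_1,\dots,\a_m\}$, compare coefficients using the structure constants to obtain $v_k\phi(x)\in\span\{v_1,\dots,v_m\}$, and then combine simplicity with Lemma~\ref{lemma.lambdabound} for the dimension equality. Your explicit remarks on the uniqueness of the $\a_i$-expansion and on the nonvanishing of the span are minor clarifications the paper leaves implicit, but otherwise the arguments coincide.
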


\begin{proof} We must show that $v_i\phi(x)\in\span\{v_1,\dots, v_m\}$ for all $x\in K$ and all $i$.  On the one hand, $v\phi(x)=\bigl(\sum_i\a_iv_i\bigr)\phi(x)=\sum_i\a_iv_i\phi(x)$.  On the other hand, 
\begin{equation}\begin{split}v\phi(x)=\lambda(x)v&=\bigl(\sum_p\lambda_p(x)\a_p\bigr)\bigl(\sum_q\a_qv_q\bigr)\\
&=\sum_{p,q}\lambda_p(x)\a_p\a_qv_q=\sum_i\a_i\bigl(\sum_{p,q}\beta_{pqi}\lambda_p(x)v_q\bigr).\end{split}\end{equation}
Matching up coefficients of $\a_i$ shows that $v_i\phi(x)=\sum_{p,q}\beta_{pqi}\lambda_p(x)v_q$, so that $v_i\phi(x)\in\span\{v_1,\dots,v_m\}$.  This proves the first assertion.

If $V$ is simple, the first part of the lemma implies $V=\span\{v_1,\dots,v_m\}$.  Thus, $m = |\lambda^G| \geq \dim _KV$.  On the other hand, $|\lambda^{G}| \leq \dim _KV$ by Lemma \ref{lemma.lambdabound}.  Thus, $|\lambda^{G}|=\dim _KV$ when $V$ is simple.    
\end{proof}

\begin{prop} \label{prop.bij}
Let $\phi:K \rightarrow M_{n}(K)$ be a homomorphism and let $\lambda:K \rightarrow \overline{K}$ be the eigenvalue of a common eigenvector of $\operatorname{im }\phi \subset M_{n}(\overline{K})$.  The map 
$$
\Phi:\{\mbox{Isomorphism classes of simples in }\Vect(K) \} \rightarrow \Lambda(K)
$$
defined by $\Phi([_1K^{n}_\phi])=\lambda^{G}$ is a bijection. 
\end{prop}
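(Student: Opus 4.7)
The plan is to verify that $\Phi$ is well-defined, injective, and surjective. Independence of $\Phi([{_1K^n_\phi}])$ from the choice of representative $\phi$ within the isomorphism class is immediate from Lemma \ref{isomorphism lemma} (applied with $L=K$): if $\phi(x) = P\psi(x)P^{-1}$ for some $P \in GL_n(K)$, then $v \mapsto vP$ carries common eigenvectors of $\im \phi$ onto those of $\im \psi$ preserving eigenvalues.

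The crux of well-definedness is independence from the choice of common eigenvector for a simple $V = {_1K^n_\phi}$. Let $v, v' \in \bar K^n$ be common eigenvectors of $\im \phi$ with eigenvalues $\lambda$ and $\lambda'$. Decomposing $v = \sum_{i=1}^m \a_i v_i$ as in Lemma \ref{lemma.lambdaexact}, that lemma gives $n = m = |\lambda^G|$ and exhibits $v_1, \ldots, v_m$ as a $K$-basis of $V$. For coset representatives $\sigma_1, \ldots, \sigma_m$ of $G/G_\lambda$, the translates $\sigma_1(v), \ldots, \sigma_m(v) \in \bar K^n$ are common eigenvectors with pairwise distinct eigenvalues $\sigma_i \lambda$, so they are $\bar K$-linearly independent and form a basis of $\bar K^n = \bar K^m$. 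Consequently $\bar K^n = \bigoplus_{i=1}^m E_{\sigma_i \lambda}$ with each summand one-dimensional, and no embedding outside $\lambda^G$ occurs as a common eigenvalue of $\im \phi$. Since $v'$ is a nonzero element of $E_{\lambda'}$, this forces $\lambda' \in \lambda^G$.

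The same eigenspace description yields injectivity. If ${_1K^n_\phi}$ and ${_1K^n_\psi}$ are simple bimodules with common image $\lambda^G$, then over $\bar K$ each of $\phi$ and $\psi$ is diagonalizable with spectrum $\{\sigma_1 \lambda, \ldots, \sigma_m \lambda\}$ (each eigenvalue appearing once). After permuting the diagonalizing bases to match orderings, we obtain $\phi \sim_{\bar K} \psi$, and Lemma \ref{Piziak lemma} then gives $\phi \sim \psi$.

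For surjectivity, given $\lambda^G \in \Lambda(K)$ of size $m$, set $V := K(\lambda)$ with its standard left $K$-module structure (of dimension $m$) and right action $w \cdot x := w\lambda(x)$. Any $K$-basis of $K(\lambda)$ yields a representing homomorphism $\phi : K \to M_m(K)$. Because $K$ is perfect, $K(\lambda)/K$ is separable, so $K(\lambda) \otimes_K \bar K \cong \prod_{\sigma \in G/G_\lambda} \bar K$ as $\bar K$-algebras, under which $\phi(x)$ is identified with $\diag(\sigma_1\lambda(x), \ldots, \sigma_m\lambda(x))$; hence $\Phi([V]) = \lambda^G$. Simplicity of $V$ follows because any sub-bimodule is both a left $K$-subspace of $K(\lambda)$ and closed under right multiplication by every element of $\lambda(K)$, hence is a $K(\lambda)$-subspace of the one-dimensional $K(\lambda)$-module $K(\lambda)$. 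The main obstacle in the plan is the eigenspace-dimension bookkeeping of the second paragraph, which hinges on the sharp identity $n = m$ furnished by Lemma \ref{lemma.lambdaexact}.
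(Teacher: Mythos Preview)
Your proof is correct. Well-definedness and injectivity follow the same lines as the paper's argument: both use that for a simple ${_1K^n_\phi}$ the translates $\sigma_i(v)$ form a $\bar K$-basis of eigenvectors with eigenvalues exhausting $\lambda^G$, and then invoke Noether--Deuring.

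The surjectivity argument, however, is packaged quite differently. The paper writes down an explicit matrix homomorphism $\phi_{ij}(x)=\sum_k\beta_{jki}\lambda_k(x)$ in terms of the structure constants of $K(\lambda)/K$, verifies by direct computation that each $\sigma(v)$ is an eigenvector, that $\phi$ is multiplicative, and then proves simplicity by a bootstrap: any simple sub-bimodule $W$ has $\Phi([W])\subset\lambda^G$, whence $\Phi([W])=\lambda^G$ and injectivity forces $W=V(\lambda)$. Your construction is the same object described intrinsically---$V=K(\lambda)$ with right action $w\cdot x=w\lambda(x)$ is precisely the regular representation whose matrix in the basis $\{\a_i\}$ recovers the paper's $\phi$---but your route avoids all the coordinate computations. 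The separable tensor decomposition $K(\lambda)\otimes_K\bar K\cong\prod\bar K$ replaces the paper's eigenvector verification, and your simplicity argument (a sub-bimodule is a $K(\lambda)$-submodule of the one-dimensional $K(\lambda)$-space $K(\lambda)$) is both direct and shorter than the bootstrap. What the paper's explicit formula buys is the concrete matrix description used in the subsequent examples and in the computation of $\End(V(\lambda))$; your argument is cleaner but would require reintroducing coordinates for those purposes.
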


\begin{proof} {\it Part 1.}  We show $\Phi$ is an injection.
\newline
\newline
{\it Part 1, Step 1.  We show $\Phi$ is well defined.}  Let $V$ be a simple object in $\Vect(K)$, and suppose $V \cong {_1K^n_\phi}$.  By Lemma \ref{lemma.lambdaexact}, $|\lambda^G|=n$.  Let us write out the elements of $\lambda^G$ as $\{\lambda,\sigma_2\lambda,\dots,\sigma_n\lambda\}$.  Then taking $\{v,\sigma_2(v),\dots, \sigma_n(v)\}$ as a basis for $\bar K^n$, we see that there exists $Q\in GL_n(\bar K)$ such that 
\begin{equation}Q\phi(x)Q^{-1}=\diag(\lambda(x),\sigma_2\lambda(x),\dots,\sigma_n\lambda(x))\label{similarity equation}\end{equation}
for all $x\in K$.  In particular, if $\mu:K\rightarrow \bar K$ is the eigenvalue for $\phi(x)$ corresponding to some common eigenvector $w$ of $\im\phi$, then we must have $\mu=\sigma_i\lambda$ for some $i$; that is, $\mu^G=\lambda^G$.  

If we choose a different isomorphism $V\cong {_1K^n_\psi}$, then $\phi\sim\psi$; say  $\phi\cong P\psi P^{-1}$ for some $P\in GL_n(K)$.  If $v$ is a common eigenvector for $\im\phi$ with corresponding eigenvalue $\lambda$, then an easy computation shows that $vP$ is a common eigenvector for $\im(\psi)$ with corresponding eigenvalue $\lambda$.  
\newline
\newline
{\it Part 1, Step 2.  We show $\Phi$ is an injection.}  If $K$ is finite, then every embedding of $K$ into $\bar K$ is in fact an automorphism of $K$.  Hence every simple in $\Vect(K)$ is isomorphic to $_1K_\phi$ for some $\phi\in\Aut(K)$, and the above correspondence just sends $_1K_\phi$ to $\phi$.  Thus the claim follows when $K$ is finite.  

Now suppose that $K$ is infinite, $\Phi([V])=\lambda^{G}=\Phi([W])$ and $|\lambda^G|=n$.  Write $V\cong{_1K^n_\phi}$ and $W\cong{_1K^n_\psi}$.  As in equation \eqref{similarity equation}, there are invertible matrices $P,Q\in M_n(\bar K)$ such that 
\begin{equation}P\phi(x)P^{-1}=Q\psi(x)Q^{-1}=\diag(\lambda(x),\sigma_2\lambda(x),\dots,\sigma_n\lambda(x)),
\end{equation}
so that $\phi\sim_{\bar K} \psi$.  By Lemma \ref{Piziak lemma}, $\phi\sim\psi$ and $V\cong W$.
\newline
\newline
{\it Part 2.}  Let $\lambda:K\rightarrow \bar K$ be an embedding with $\lambda^G\in \Lambda(K)$.  We shall construct a simple two-sided vector space $V(\lambda)={_1K^n_{\phi}}$ from $\lambda$, such that $v=(\a_1,\dots,\a_n)\in K(\lambda)^n$ is a common eigenvector for $\im\phi$, with corresponding eigenvalue $\lambda$. Retaining the above notation, we define a map $\phi=(\phi_{ij}):K\rightarrow M_n(K)$ by 
\begin{equation}\phi_{ij}(x)=\sum_{k=1}^n\beta_{jki}\lambda_k(x).\label{phi equation}\end{equation}
\newline
{\it Part 2, Step 1.  We prove that, for all $\sigma\in G$ and $x\in K$, $\sigma(v)$ is an eigenvector for $\phi(x)$ with eigenvalue $\sigma\lambda(x)$.}  We have $\sigma(v)=(\sigma(\a_1),\dots,\sigma(\a_n))$ and $\sigma\lambda(x)=\sum_{i=1}^n\lambda_i(x)\sigma(\a_i)$.  On the one hand,
\begin{equation}\begin{split}
\sigma(v)\phi(x)&=(\sigma(\a_1),\dots,\sigma(\a_n))\phi(x)\\
&=\bigl(\sum_i\phi_{i1}(x)\sigma(\a_i),\dots,\sum_i\phi_{in}(x)\sigma(\a_i)\bigr)\\
&=\bigl(\sum_{i,k}\beta_{1ki}\lambda_k(x)\sigma(\a_i),\dots, \sum_{i,k}\beta_{nki}\lambda_k(x)\sigma(\a_i)\bigr).
\end{split}\end{equation}
On the other hand, 
\begin{equation}
\begin{split}
\sigma\lambda(x)\sigma(v)&=\bigl(\sum_{k}\lambda_k(x)\sigma(\a_k)\bigr)(\sigma(\a_1),\dots,\sigma(\a_n))\\
&=\bigl(\sum_k\lambda_k(x)\sigma(\a_k\a_1),\dots,\sum_k\lambda_k(x)\sigma(\a_k\a_n)\bigr)\\
&=\bigl(\sum_{i,k}\lambda_k(x)\beta_{k1i}\sigma(\a_i),\dots,\sum_{i,k}\lambda_k(x)\beta_{kni}\sigma(\a_i)\bigr)
\end{split}
\end{equation}
Comparing coordinates and using the identity $\beta_{pqr}=\beta_{qpr}$ for all $p,q,r$ gives the result.
\newline
\newline
{\it Part 2, Step 2.  We show $\phi$ is a homomorphism.}  Since each $\lambda_k$ is an additive function it is clear that $\phi$ is additive.  To see that $\phi$ is multiplicative, write out $\lambda^G=\{\sigma_1\lambda,\dots,\sigma_n\lambda\}$ (where $\sigma_1$ is the identity).  Then $\{\sigma_1(v),\dots,\sigma_n(v)\}$ is a basis for $\bar K^n$, and for all $x,y\in K$, we have
\begin{multline}
\sigma_i(v)\phi(x)\phi(y)=\sigma_i\lambda(x)\sigma_i(v)\phi(y)=\sigma_i\lambda(x)\sigma_i\lambda(y)\sigma_i(v)\\=\sigma_i\lambda(xy)\sigma_i(v)=\sigma_i(v)\phi(xy).\end{multline}
This shows that $\phi(x)\phi(y)$ and $\phi(xy)$ act as the same linear transformation on each $\sigma_i(v)$.  Since the $\sigma_i(v)$ form a basis for $\bar K^n$, we have that $\phi(x)\phi(y)=\phi(xy)$ for all $x,y\in K$.
\newline
\newline
{\it Part 2, Step 3.  Since $\phi$ is a homomorphism, we can define the two-sided vector space $V(\lambda)={_1K^n_\phi}$.  We prove $V(\lambda)$ is simple.}  Suppose that $W$ is a simple sub-bimodule of $V(\lambda)$ with $\dim W=m$, and fix a left basis for $V(\lambda)$ containing a left basis for $W$.  Then, relative to this basis, we have $V(\lambda)\cong{_1K^n_\psi}$, where $\psi=\begin{pmatrix}\psi_1&\theta\\0&\psi_2\end{pmatrix}$ and $W\cong{_1K^m_{\psi_2}}$.  Since $W$ is simple, there is a unique orbit $\mu^G=\{\mu_1,\dots,\mu_m\}\in\Lambda(K)$ with $\psi_2\sim_{\bar K}\diag(\mu_1,\dots,\mu_m)$.  On the other hand, we have by definition of $V(\lambda)$ that $\phi\sim_{\bar K}\diag(\lambda,\sigma_2\lambda,\dots,\sigma_n\lambda)$; since $\phi\sim\psi$ we see that $\mu_1=\sigma_j\lambda$ for some $j$.  Hence $\mu^G=\lambda^G$ and $W=V(\lambda)$ since $\Phi$ is injective.
\end{proof}

To complete the proof of Theorem \ref{simple theorem}, we need to compute $\End(V(\lambda))$.  

\begin{prop} $\End(V(\lambda))\cong K(\lambda)$.\end{prop}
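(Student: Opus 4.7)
The plan is to identify $\End(V(\lambda))$ with the centralizer $C$ of $\im\phi$ in $M_n(K)$, to bound $\dim_K C\le n=[K(\lambda):K]$ using the $\bar K$-diagonalization of $\phi$ recorded in the proof of Proposition \ref{prop.bij}, and to construct an explicit injective $K$-algebra map $K(\lambda)\hookrightarrow\End(V(\lambda))$, which by the bound is forced to be an isomorphism.

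First, a direct check shows that a bimodule endomorphism of ${_1K^n_\phi}$ is given by right multiplication by some $M\in M_n(K)$, with left $K$-linearity automatic (the left action is entrywise) and right $K$-linearity equivalent to $M\phi(x)=\phi(x)M$ for every $x\in K$. Hence $\End(V(\lambda))$ is identified with $C$, up to passing to the opposite algebra, which makes no difference since $C$ will turn out to be commutative. For the upper bound, there is $Q\in GL_n(\bar K)$ with $Q\phi(x)Q^{-1}=\diag(\lambda(x),\sigma_2\lambda(x),\dots,\sigma_n\lambda(x))$, and since the embeddings $\sigma_i\lambda$ are pairwise distinct, some $x\in K$ yields $n$ distinct diagonal entries. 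Therefore the centralizer of $\im\phi$ in $M_n(\bar K)$ consists of exactly those matrices that $Q$ conjugates to diagonal, and has $\bar K$-dimension $n$; since $\bar K\otimes_K C$ embeds in it, $\dim_K C\le n$.

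For the lower bound, associate to each $\alpha\in K(\lambda)$ a matrix $T_\alpha\in M_n(\bar K)$ characterized by $\sigma_i(v)\,T_\alpha=\sigma_i(\alpha)\,\sigma_i(v)$ for $i=1,\dots,n$, so that $T_\alpha$ is diagonal in the eigenbasis $\{\sigma_i(v)\}$ with entries $\sigma_i(\alpha)$. The key step is to verify that $T_\alpha$ in fact lies in $M_n(K)$ by Galois invariance: given $\sigma\in G$, write $\sigma\sigma_i=\sigma_{j(i)}\tau_i$ with $\tau_i\in G_\lambda=G(K(\lambda))$; since both $v\in K(\lambda)^n$ and $\alpha\in K(\lambda)$ are fixed by $\tau_i$, one obtains $\sigma\sigma_i(v)=\sigma_{j(i)}(v)$ and $\sigma\sigma_i(\alpha)=\sigma_{j(i)}(\alpha)$. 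Applying $\sigma$ entrywise to the defining relation then yields $\sigma_{j(i)}(v)\,\sigma(T_\alpha)=\sigma_{j(i)}(v)\,T_\alpha$, and as $i\mapsto j(i)$ is a permutation the vectors $\{\sigma_{j(i)}(v)\}$ still form a basis, so $\sigma(T_\alpha)=T_\alpha$.

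It remains to observe that each $T_\alpha$ is diagonal in the eigenbasis and so commutes with every $\phi(x)$, hence $T_\alpha\in\End(V(\lambda))$; and that $\alpha\mapsto T_\alpha$ is manifestly unital, $K$-linear, multiplicative, and injective ($T_\alpha=0$ forces $\sigma_i(\alpha)=0$ for all $i$). The resulting injection $K(\lambda)\hookrightarrow\End(V(\lambda))$ must be an isomorphism by the dimension bound. I expect the Galois-invariance verification to be the only delicate part, as it is precisely where one uses that $K(\lambda)$ is the fixed field of $G_\lambda$ to simultaneously absorb $\tau_i$ on the eigenvector and on the scalar.
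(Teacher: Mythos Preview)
Your proof is correct and follows the same overall strategy as the paper: identify $\End(V(\lambda))$ with the centralizer of $\im\phi$ in $M_n(K)$, bound its $K$-dimension above by $n$, and produce an injective $K$-algebra map from $K(\lambda)$. The execution, however, differs in two places. For the upper bound, the paper argues that a simple bimodule is cyclic, so any $n+1$ endomorphisms are determined by their (left $K$-linearly dependent) values on a generator; you instead extend scalars to $\bar K$, diagonalize, and observe that the $\bar K$-centralizer is $n$-dimensional. For the construction, the paper writes down the matrices $M(p)$ explicitly via the structure constants $\beta_{pji}$, so that membership in $M_n(K)$ is immediate, and then computes that $\sigma(v)M(p)=\sigma(\alpha_p)\sigma(v)$. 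You run this in reverse: you define $T_\alpha$ by prescribing its eigenvalues $\sigma_i(\alpha)$ on the eigenbasis $\{\sigma_i(v)\}$ and then prove $T_\alpha\in M_n(K)$ by a Galois-descent argument using $G_\lambda=G(K(\lambda))$. Your matrices satisfy $T_{\alpha_p}=M(p)$, so the two constructions agree; your route is more conceptual and avoids the structure-constant bookkeeping, while the paper's route makes $K$-rationality transparent at the cost of a longer eigenvalue computation. One small remark: the claim that a \emph{single} $x\in K$ gives $n$ distinct diagonal entries is not needed---pairwise distinctness of the $\sigma_i\lambda$ already forces the $\bar K$-centralizer to be diagonal---so you may wish to phrase that step accordingly.
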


\begin{proof}  Let $|\lambda^{G}|=n$.  We first note that $\End(V(\lambda))$ can be made into a left vector space over $K$ by defining $(xf)(v)=xf(v)$ for $x\in K$, $v\in V(\lambda)$, and $f\in\End(V(\lambda))$.  Also, since $V(\lambda)$ is a simple bimodule, it is generated as a bimodule by a single element $w$.  If $\{f_1,\dots, f_{n+1}\}$ is a subset of $\End(V(\lambda))$, then $\{f_1(w),\dots, f_{n+1}(w)\}$ are necessarily linearly dependent in $V(\lambda)$; hence there exist $x_i\in K$ such that the endomorphism $\sum_{i=1}^{n+1}x_if_i$ acts as $0$ on $w$. Since $w$ generates $V(\lambda)$ we see that $\sum_{i=1}^{n+1}x_if_i=0$ and so $\dim\End(V(\lambda))\leq n$. 

Fix an isomorphism $V(\lambda)\cong{_1K^n_\phi}$, and let $\{e_1,\dots,e_n\}$ be the standard basis for $K^n$. Given $f\in\End(V(\lambda))$, we can write $f(e_i)=\sum_jf_{ji}e_j$, where $f_{ji}\in K$.  Then the map $f\mapsto M(f)=(f_{ij})$ allows us to realize each $f\in\End(V(\lambda))$ as right multiplication by the matrix $M(f)\in M_n(K)$.  The fact that $f$ is a bimodule endomorphism is equivalent to $M(f)$ commuting with $\phi(x)$ for all $x\in K$.  Conversely, if $M \in M_n(K)$ with $M=(m_{ij})$ and if $M$ commutes with $\phi(x)$, the rule $e_{i} \mapsto \sum_jm_{ji}e_j$ makes $M$ an element of $\End(V(\lambda))$.  

For each $p\leq n$, let $M(p)$ be the matrix given by $M(p)_{ij}=\b_{pji}$.  We prove that $M(p) \in \End(V(\lambda))$.  If $v=(\a_1,\dots,\a_n)$ as in \eqref{eqn.v}, then one calculates that 
\begin{equation}
\sigma(v)M(p)=(\sigma(\a_1),\dots,\sigma(\a_n))(\beta_{pji})=\bigl(\sum_{j}\beta_{p1j}\sigma(\a_j),\dots,\sum_j\beta_{pnj}\sigma(\a_j)\bigr)\end{equation}
for all $\sigma\in\Aut(\bar K/K)$. On the other hand, 
\begin{equation}
\sigma(\a_p)\sigma(\a_i)=\sigma(\a_p\a_i)=\sigma(\sum_j\beta_{pij}\a_j)=\sum_j\beta_{pij}\sigma(\a_j).\end{equation}
Hence we see that the $i$-th component of $\sigma(v)M(p)$ is $\sigma(\a_p)\sigma(\a_i)$, and we conclude that $\sigma(v)$ is an eigenvector for $M(p)$ with eigenvalue $\sigma(\a_p)$; in particular, we see that $\sigma(v)M(p)M(q)=\sigma(v)M(q)M(p)$ for all $p,q\leq n$ and $\sigma\in\Aut(\bar K/K)$.  Since $\{v,\sigma_2(v),\dots,\sigma_n(v)\}$ is a basis for $\bar K^n$, we conclude that in fact $M(p)$ and $M(q)$ commute for all $p,q$.  Finally, since $\sigma_i(v)$ is a common eigenvector for $\phi(x)$ and $M(p)$ for all $p\leq n$ and $x\in K$, we see that $M(p)$ and $\phi(x)$ commute.  Therefore, $\{M(1),\dots, M(n)\}$ are pairwise commuting, $K$-linearly independent elements of $\End(V(\lambda))$.  Since $\dim\End(V(\lambda))\leq n$, we conclude that $\End(V(\lambda))\cong K\{M(1),\dots, M(n)\}$; one checks easily that the map $M(p)\mapsto\a_p$ gives the desired ring isomorphism $\End(V(\lambda))\cong K(\lambda)$.
\end{proof}

We illustrate Theorem \ref{simple theorem} with several examples.

\begin{example} Suppose that there exists $\lambda\in\Emb(K)$ with $|\lambda^G|=2$.  Then $K(\lambda)$ is a degree 2 extension of $K$, and so $K(\lambda)=K(\sqrt{m})$ for some $m\in K$. Then $\Aut(K(\sqrt{m})/K)$ is generated by $\sigma$, where $\sigma(\sqrt{m})=-\sqrt{m}$, and $\lambda^G=\{\lambda,\sigma\lambda\}$.  

Using $\{1,\sqrt{m}\}$ as a $K$-basis for $\sqrt{m}$, we can write $\lambda(x)=\lambda_1(x)+\lambda_2(x)\sqrt{m}$.  If we write out the matrix $(\phi_{ij}(x))$, we see that 
\begin{equation} \label{eqn.mat}
\phi(x)=\begin{pmatrix} \lambda_1(x) &m\lambda_2(x)\\ \lambda_2(x)&\lambda_1(x)\end{pmatrix},
\end{equation}
and that 
\[(1,\sqrt{m})\begin{pmatrix} \lambda_1(x) &m\lambda_2(x)\\ \lambda_2(x)&\lambda_1(x)\end{pmatrix}=(\lambda(x),\lambda(x)\sqrt{m})=\lambda(x)(1,\sqrt{m}).\]
Moreover,  The fact that $\phi$ is a homomorphism gives the formulas 
\[\begin{split}
\lambda_1(xy)&=\lambda_1(x)\lambda_1(y)+m\lambda_2(x)\lambda_2(y)\\
\lambda_2(xy)&=\lambda_1(x)\lambda_2(y)+\lambda_1(y)\lambda_2(x).
\end{split}\]
Thus we recover \cite[Theorem 1.3(iii)]{pat1} as a special case of Theorem \ref{simple theorem}. \qed
\end{example}

\begin{example}
Let $K$ be a field of characteristic different from $3$, and suppose there exists $\lambda \in \operatorname{Emb}(K)$ such that $|\lambda^G|=3$.  Then $[K(\lambda):K]=3$, so that $K(\lambda)=K(\gamma)$, where $\gamma$ is the root of an irreducible polynomial $x^{3}+bx+c$ with $b,c \in K$.  Thus, $\{1, \gamma, \gamma^2\}$ is a basis of $K(\lambda)/K$.  In this basis, we find that $(\phi_{ij}(x))$ is the matrix
\[\begin{pmatrix}
\lambda_{0}(x) & -c\lambda_{2}(x) & -c\lambda_{1}(x) \\
\lambda_{1}(x) & \lambda_{0}(x)-b\lambda_{2}(x) & -b\lambda_{1}(x)-c\lambda_{2}(x) \\
\lambda_{2}(x) & \lambda_{1}(x) & \lambda_{0}(x) - b\lambda_{2}(x)
\end{pmatrix}\]  
where the functions $\lambda_{0}$, $\lambda_{1}$ and $\lambda_{2}$ satisfy the relations

\begin{equation*}
\begin{split}
\lambda_{0}(xy)& =\lambda_{0}(x)\lambda_{0}(y)-c\lambda_{2}(x)\lambda_{1}(y)-c\lambda_{1}(x)\lambda_{2}(y)\\
\lambda_{1}(xy)& =\lambda_{1}(x)\lambda_{0}(y)+(\lambda_{0}(x)+b\lambda_{2}(x))\lambda_{1}(y)-(b\lambda_{1}(x)+c\lambda_{2}(x))\lambda_{2}(y))\\
\lambda_{2}(xy)& =\lambda_{2}(x)\lambda_{0}(y)+\lambda_{1}(x)\lambda_{1}(y)+(\lambda_{0}(x)-b\lambda_{2}(x))\lambda_2(y).
\end{split}
\end{equation*}
\qed

\end{example}

\begin{example} \label{example.rho}
Suppose $p\geq 3$ is prime, $\rho =\sqrt[p]{2}$, $\zeta$ is a primitive $p$-th root of unity, $k=\Q$ and $K=\Q(\rho)$.    Then $K(\zeta)$ is the Galois closure of $K/\Q$, with $\Aut(K(\zeta)/K)=\{\sigma_i:1\leq i\leq p-1\}$, where $\sigma_i(\zeta)=\zeta^i$.  If we let $\lambda:K\rightarrow \bar K$ be the embedding that takes $\rho$ to $\zeta\rho$, then $\Emb(K)=\{\Id_K\}\cup\{\sigma_i\lambda:1\leq i\leq p-1\}$. Hence $\Lambda(K)$ consists of the two orbits $\{\Id_K\}$ and $\lambda^G=\{\sigma_i\lambda:1\leq i\leq p-1\}$, and so there are up to isomorphism two simples in $\Vect(K)$: the trivial simple bimodule $K$ corresponding to $\{\Id_K\}$, and a $p-1$-dimensional simple corresponding to $\lambda^G$.   

We now construct the matrix homomorphism $\phi:K\rightarrow M_{p-1}(K)$ representing the $p-1$-dimensional simple as in \eqref{phi equation}.  First, taking $\{1,\zeta,\dots,\zeta^{p-2}\}$ as a basis of $K(\zeta)/K$ and letting $\alpha_{i}=\zeta^{i}$ for $0 \leq i \leq p-2$ (we have shifted our indices for ease of computation), we compute the constants $\beta_{jki}$:  if $j+k \neq p-1$, then 
\[\alpha_{j}\alpha_{k}=\zeta^{j}\zeta^{k}=\zeta^{j+k}=\alpha_{j+k},\]
where the superscripts and subscripts are taken modulo $p$.  Therefore, when $j+k \neq p-1$, $\beta_{jki}= 1$ if and only if $i \equiv j+k \pmod{p}$, and $\beta_{jki}=0$ otherwise.

If $j+k=p-1$, then 
\[\alpha_{j}\alpha_{k}=\zeta^{p-1}=-1-\zeta- \cdots -\zeta^{p-2} = -\alpha_{0}-\alpha_{1}- \cdots - \alpha_{p-2}.\]
Therefore, when $j+k=p-1$, $\beta_{jki}=-1$ for all $0 \leq i \leq p-2$.  

Thus, 
\begin{enumerate}
\item if $j=0$ then $j+k \neq p-1$, so $\beta_{0ki}=\delta_{ki}$, and  

\item if $j \neq 0$, either
\begin{enumerate}
\item
$k=p-1-j$, in which case $\beta_{j,p-1-j,i}=-1$ for all $i$ or 

\item $k \neq p-1-j$, in which case $\beta_{j,i-j,i}=1$ for all $i \neq j-1$ (where subscripts are taken modulo $p$) and $\beta_{jki}=0$ otherwise. 
\end{enumerate}
\end{enumerate}
Next, we write
\[\lambda(x)=\lambda_{0}(x)+\lambda_{1}(x)\zeta+\cdots+\lambda_{p-2}(x)\zeta^{p-2}\]
and determine the functions $\lambda_{i}(x)$, $0 \leq i \leq p-2$.  If $x \in K$, we may write $x=\sum_{l=0}^{p-1}a_{l}\rho^{l}$ with $a_{0}, \ldots, a_{p-1} \in \Q$.  It is then easy to see that 
$$
\lambda_{i}(\sum_{l=0}^{p-1}a_{l}\rho^{l})=a_{i}\rho^{i}-a_{p-1}\rho^{p-1}
$$  
for $0 \leq i \leq p-2$.

Using the formula $\phi_{ij}(x)=\sum_{k=0}^{p-2}\beta_{jki}\lambda_{k}(x)$, we may deduce that $\phi(x)$ is the matrix
\begin{equation} \label{eqn.matprime}
\begin{pmatrix}
\lambda_{0}(x) & -\lambda_{1}(x) \\
\lambda_{1}(x) & -\lambda_{1}(x)+\lambda_{0}(x)
\end{pmatrix}
\end{equation}
when $p=3$ and $\phi(x)$ is the matrix
$$
\begin{pmatrix}
\lambda_{0}(x) & -\lambda_{p-2}(x) & -\lambda_{p-3}(x)+\lambda_{p-2}(x) & \cdots & -\lambda_{1}(x)+\lambda_{2}(x) \\
\lambda_{1}(x) & -\lambda_{p-2}(x)+\lambda_{0}(x) & -\lambda_{p-3}(x) & \cdots & -\lambda_{1}(x)+\lambda_{3}(x) \\
\lambda_{2}(x) & -\lambda_{p-2}(x)+\lambda_{1}(x) & -\lambda_{p-3}(x)+\lambda_{0}(x) & \cdots & -\lambda_{1}(x)+\lambda_{4}(x) \\
\vdots         &        \vdots                    &    \vdots         &        & \vdots \\
\lambda_{p-3}(x) & -\lambda_{p-2}(x)+\lambda_{p-4}(x) & -\lambda_{p-3}(x)+\lambda_{p-5}(x) & \cdots & -\lambda_{1}(x) \\
\lambda_{p-2}(x) & -\lambda_{p-2}(x)+\lambda_{p-3}(x) & -\lambda_{p-3}+\lambda_{p-4}(x) & \cdots & -\lambda_{1}(x)+\lambda_{0}(x)
\end{pmatrix}
$$
when $p \geq 5$.  

Had we chosen a different basis for $K(\lambda)$ over $K$, then $\phi(x)$ would have a different form.  For example, when $p=3$, then $K(\zeta)=K(\sqrt{-3})$.  If we use $\{1,\sqrt{-3}\}$ as a basis for $K(\zeta)$ over $K$, then we find that $\phi(x)$ takes the form \eqref{eqn.mat}. \qed
\end{example}

We conclude this section by noting that there are no nontrivial extensions between nonisomorphic simple bimodules.  The result is probably well known, but we were unable to find a reference.

\begin{prop} $\Ext^1_{K^e}(V,W)=0$ for nonisomorphic simple bimodules $V,W$. \label{ext prop}
\end{prop}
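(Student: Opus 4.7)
The plan is to represent an arbitrary extension $0\to W\to E\to V\to 0$ as an upper block-triangular matrix homomorphism and to produce, over $\bar K$, a change of basis that block-diagonalizes it; by Lemma \ref{Piziak lemma} the splitting will then descend to $K$.

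Choosing a left basis of $E$ extending one of $W$, I would write $E\cong{}_1K^{n+m}_\phi$ with
\[\phi(x)=\begin{pmatrix}\psi(x)&\theta(x)\\ 0&\chi(x)\end{pmatrix},\]
where $\psi$ represents the quotient $V=V(\lambda)$ and $\chi$ represents the subobject $W=V(\mu)$. Multiplicativity of $\phi$ forces the cocycle condition $\theta(xy)=\psi(x)\theta(y)+\theta(x)\chi(y)$, and conjugating by $\begin{pmatrix}I&X\\ 0&I\end{pmatrix}$ eliminates the off-diagonal block precisely when $\psi(x)X-X\chi(x)=\theta(x)$ for every $x\in K$.

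Over $\bar K$ the commuting families $\psi$ and $\chi$ simultaneously diagonalize (cf.\ Proposition \ref{prop.bij}) as $\tilde\psi=\diag(\lambda_1,\ldots,\lambda_n)$ and $\tilde\chi=\diag(\mu_1,\ldots,\mu_m)$, with $\{\lambda_i\}=\lambda^G$ and $\{\mu_j\}=\mu^G$. Because $V\not\cong W$, Theorem \ref{simple theorem} gives $\lambda^G\cap\mu^G=\emptyset$, so $\lambda_i\neq\mu_j$ for all $i,j$. In these coordinates the Sylvester equation decouples into the scalar equations $(\lambda_i(x)-\mu_j(x))X_{ij}=\tilde\theta(x)_{ij}$, while the cocycle condition becomes $\tilde\theta(xy)_{ij}=\lambda_i(x)\tilde\theta(y)_{ij}+\tilde\theta(x)_{ij}\mu_j(y)$.

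The main technical step is the following functional-equation lemma: if $\sigma\neq\tau$ are $k$-embeddings of $K$ into $\bar K$ and $f:K\to\bar K$ satisfies $f(xy)=\sigma(x)f(y)+f(x)\tau(y)$, then $f(x)=c(\sigma(x)-\tau(x))$ for some $c\in\bar K$. To prove it I would exploit commutativity of $K$: equating the expressions for $f(xy)$ and $f(yx)$ yields $(\sigma(x)-\tau(x))f(y)=(\sigma(y)-\tau(y))f(x)$, and fixing any $x_0$ with $\sigma(x_0)\neq\tau(x_0)$ (which exists because $\sigma\neq\tau$) pins down $c=f(x_0)/(\sigma(x_0)-\tau(x_0))$. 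Applying this entrywise to $\tilde\theta$ produces constants $c_{ij}\in\bar K$ with $\tilde\theta(x)_{ij}=c_{ij}(\lambda_i(x)-\mu_j(x))$, so $X=(c_{ij})$ solves the Sylvester equation over $\bar K$, giving $\phi\sim_{\bar K}\psi\oplus\chi$. Lemma \ref{Piziak lemma} then upgrades this to $\phi\sim\psi\oplus\chi$, and hence $E\cong V\oplus W$, as needed. The only real obstacle is the functional-equation lemma; once it is in hand, the rest is bookkeeping.
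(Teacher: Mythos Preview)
Your proof is correct and follows the same overall arc as the paper: represent the extension by a block upper-triangular homomorphism, show it is $\sim_{\bar K}$ to the block-diagonal one, and descend via Lemma~\ref{Piziak lemma}. The paper's own argument is terser at the key step: it simply asserts that
\[
\begin{pmatrix}\psi&\theta\\0&\phi\end{pmatrix}\sim_{\bar K}\diag(\lambda_1,\dots,\lambda_m,\mu_1,\dots,\mu_n)\sim_{\bar K}\begin{pmatrix}\psi&0\\0&\phi\end{pmatrix},
\]
relying implicitly on the fact (from the proof of Proposition~\ref{prop.bij}) that each diagonal block already diagonalizes over $\bar K$ and that the $\lambda_i,\mu_j$ are pairwise distinct embeddings. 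You instead construct the conjugating unipotent $\begin{pmatrix}I&X\\0&I\end{pmatrix}$ explicitly by solving the Sylvester equation entrywise via your crossed-homomorphism lemma. Your route is more hands-on and makes the splitting completely explicit; the paper's is shorter but leaves the simultaneous diagonalizability over $\bar K$ to the reader.
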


\begin{proof} Suppose that $V$ and $W$ are nonisomorphic simple bimodules.  Let $\Phi([V])=\lambda^G$ and $\Phi([W])=\mu^G$, and fix isomorphisms $V\cong {_1K^m_\phi}$ and $W\cong {_1K^n_\psi}$.  If $U$ is an extension of $V$ by $W$, then there is a basis for $K^{m+n}$ such that $U\cong {_1K^{m+n}_\eta}$, where $\eta=\begin{pmatrix} \psi& \theta\\0&\phi\end{pmatrix}$ for some $\theta:K\rightarrow M_{n\times m}(K)$.  Enumerate the elements of $\lambda^G$ and $\mu^G$ as $\{\lambda_1,\dots,\lambda_m\}$ and $\{\mu_1,\dots,\mu_n\}$, respectively.  Then we have that 
\begin{equation} \begin{pmatrix}\phi& \theta\\0&\psi\end{pmatrix}\sim_{\bar K} \diag(\lambda_1,\dots,\lambda_m,\mu_1\dots,\mu_n)\sim_{\bar K}
\begin{pmatrix}\phi&0\\ 0&\psi\end{pmatrix}.
\end{equation}
It follows by Lemma \ref{Piziak lemma} that $U\cong V\oplus W$.
\end{proof} 
\section{Algebraic $K$-theory of $\Vect(K)$}

We shall denote by $K^B_i(K)$ the Quillen $K$-theory of $\Vect(K)$ \cite{Quillen} (the superscript stands for ``bimodule").  The description of the simples in $\Vect(K)$ in Section 3 and the Devissage Theorem \cite[Corollary 5.1]{Quillen} immediately yield the following result.

\begin{thm} For all $i\geq 0$, there is an isomorphism of abelian groups 
\begin{equation} K_i^B(K)\cong\bigoplus_{\lambda^G\in\Lambda(K)}K_i(K(\lambda)).\end{equation} \label{kgroup theorem}
\end{thm}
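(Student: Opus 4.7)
The plan is to apply Quillen's Devissage Theorem to $\Vect(K)$, using the classification of simples from Theorem~\ref{simple theorem}. First I would verify the hypotheses: $\Vect(K)$ is an abelian subcategory of $K^e\text{-Mod}$ (closed under subobjects and quotients, since a sub-bimodule or a quotient of a bimodule of finite left dimension again has finite left dimension), and every $V\in\Vect(K)$ has finite length, because any chain of sub-bimodules is in particular a chain of left $K$-subspaces of $V$ and so has length at most $\dim{_KV}$. Consequently every object of $\Vect(K)$ admits a Jordan--Hölder filtration whose successive quotients are simple in $\Vect(K)$.

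Next I would invoke Quillen's Devissage Theorem in the form \cite[Corollary 5.1]{Quillen}: for such an abelian category, $K$-theory decomposes as a direct sum indexed by the isomorphism classes of simples, with the $[S]$-summand equal to $K_i(\End(S))$. Applying this to $\Vect(K)$ together with Theorem~\ref{simple theorem} --- which identifies the isomorphism classes of simples with $\Lambda(K)$ and gives $\End(V(\lambda)) \cong K(\lambda)$ for the simple $V(\lambda)$ corresponding to $\lambda^G$ --- yields
\[ K_i^B(K) \;\cong\; \bigoplus_{[S]} K_i(\End(S)) \;=\; \bigoplus_{\lambda^G\in\Lambda(K)} K_i(K(\lambda)), \]
as required.

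There is no genuine obstacle here: the theorem is essentially an immediate corollary of Theorem~\ref{simple theorem} together with Devissage. The only point requiring any care is confirming that Devissage applies to $\Vect(K)$ rather than to the full category of left $K^e$-modules, which matters when $[K:k]$ is infinite --- in that case $\Vect(K)$ is a proper subcategory of $K^e\text{-Mod}$, but it is still itself an abelian category of finite-length objects, so Quillen's corollary applies directly.
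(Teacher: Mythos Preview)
Your proposal is correct and follows exactly the paper's approach: the paper simply states that the theorem follows immediately from the description of the simples in Section~3 together with Quillen's Devissage Theorem \cite[Corollary 5.1]{Quillen}, and your write-up just spells out the verification that $\Vect(K)$ is a finite-length abelian category so that Devissage applies.
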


The Grothendieck group $K_0^B(K)$ can be made into a commutative ring by defining multiplication via the tensor product.  Thus, if $V$ and $W$ are simple bimodules in $\Vect(K)$, we define $[V]\cdot[W]=[V\otimes W]$ in $K_0^B(K)$.  (Here and below $\otimes$ denotes the tensor product over $K$.)  In particular, $[V]\cdot[W]=\sum_{i=1}^t[V_i]$, where $V_1,\dots, V_t$ are the composition factors of $V\otimes W$.  There is an especially nice description of $K_0^B(K)$ when $\Emb(K)=\Aut(K)$; this will happen for instance if $K$ is a normal algebraic extension of the centralizing subfield $k$.

\begin{prop} If $K$ is a field with $\Emb(K)=\Aut(K)$, then there is a ring isomorphism $K_0^B(K)\cong\Z[\Aut(K)]$.\label{normal field prop}
\end{prop}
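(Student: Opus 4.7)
The hypothesis $\Emb(K)=\Aut(K)$ forces a major simplification of $\Lambda(K)$, so my plan is first to identify $\Lambda(K)$ explicitly and then to pin down the ring structure by a direct tensor product computation.

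First I would unravel the hypothesis. Every $\lambda\in\Emb(K)$ satisfies $\lambda(K)=K$, so $K(\lambda)=K\vee\im(\lambda)=K$. For any $\sigma\in G=\Aut(\bar K/K)$ and any $x\in K$, $\sigma\lambda(x)=\sigma(\lambda(x))=\lambda(x)$ since $\lambda(x)\in K$ and $\sigma$ fixes $K$. Thus each orbit $\lambda^G$ is a singleton; in particular $|\lambda^G|=1=[K(\lambda):K]$, so every $\lambda$ lies in a finite orbit and $\Lambda(K)$ is canonically in bijection with $\Aut(K)$.

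Next, applying Theorem \ref{kgroup theorem} with $i=0$ gives, as an abelian group,
\[
K_0^B(K)\cong\bigoplus_{\lambda\in\Aut(K)}K_0(K(\lambda))=\bigoplus_{\lambda\in\Aut(K)}K_0(K)\cong\bigoplus_{\lambda\in\Aut(K)}\Z=\Z[\Aut(K)].
\]
Under this identification the simple bimodule $V(\lambda)$ is sent to the basis element $\lambda$. Since each $V(\lambda)$ has left dimension $|\lambda^G|=1$, I can write $V(\lambda)={_1K_\lambda}$, whose right action is $v\cdot x=v\lambda(x)$ (ordinary multiplication in $K$).

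The main (and only nontrivial) step is to identify the ring structure. Given $\lambda,\mu\in\Aut(K)$, I would compute $V(\lambda)\otimes_K V(\mu)$ directly. The tensor product relation $v\lambda(x)\otimes w=v\otimes xw$ with $v=w=1$ gives $\lambda(x)\otimes 1=1\otimes x$, so $1\otimes x=\lambda(x)(1\otimes 1)$ as left $K$-vectors; in particular $V(\lambda)\otimes_K V(\mu)$ is left $1$-dimensional, generated by $1\otimes 1$. The right action then satisfies
\[
(1\otimes 1)\cdot y=1\otimes\mu(y)=\lambda(\mu(y))(1\otimes 1)=(\lambda\mu)(y)(1\otimes 1),
\]
so $V(\lambda)\otimes_K V(\mu)\cong V(\lambda\mu)$. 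Hence $[V(\lambda)]\cdot[V(\mu)]=[V(\lambda\mu)]$ in $K_0^B(K)$, which is exactly the group-algebra multiplication $\lambda\cdot\mu=\lambda\mu$ in $\Z[\Aut(K)]$. Combined with the earlier identification of the underlying abelian groups, this proves the ring isomorphism $K_0^B(K)\cong\Z[\Aut(K)]$.

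The potential obstacle I anticipate is merely a bookkeeping one: making sure the bimodule tensor product convention lines up with the statement (in particular, that composition of automorphisms, not its reverse, is the operation induced on $K_0^B$). The small calculation above resolves the direction unambiguously, so no further subtlety is required.
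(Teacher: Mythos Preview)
Your proof is correct and follows essentially the same approach as the paper: both identify the simples as the one-dimensional bimodules ${_1K_\lambda}$ for $\lambda\in\Aut(K)$ and then verify ${_1K_\lambda}\otimes{_1K_\mu}\cong{_1K_{\lambda\mu}}$ by a direct calculation. Your version simply spells out the orbit computation and the tensor product identity in slightly more detail than the paper does.
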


\begin{proof}
Each simple bimodule in $\Vect(K)$ is isomorphic to $_1K_\phi$ for some $\phi\in \Aut(K)$.  The map $[_1K_\phi]\mapsto\phi$ then gives an isomorphism between the abelian groups $K_0^B(K)$ and $\Z[\Aut(K)]$.  Moreover, an elementary calculation shows that $_1K_\phi\otimes{_1K_\psi}\cong{_1K_{\phi\psi}}$.  From this it follows readily that the above map is actually a ring isomorphism.  
\end{proof}

In order to describe the ring structure of $K_0^B(K)$ for a general field $K$, we shall need to introduce some  notation.  Identifying the $K$-algebras $M_m(K)\otimes M_n(K)$ and $M_{mn}(K)$, we introduce multi-index notation to refer to the coordinates of $M_{mn}(K)$ as follows.  Order the pairs $(i,j)$ with $1\leq i\leq m$ and $1\leq j\leq n$ lexicographically; then there is a bijection between these pairs and $\{1,\dots, mn\}$.  We shall write $A_{(i_1,i_2),(j_1,j_2)}$ for the entry of $A\in M_{mn}(K)$ whose row corresponds to $(i_1,i_2)$ and whose column corresponds to $(j_1,j_2)$ under this bijection.  The reason for adopting this notation is that, if $A=(a_{ij})\in M_m(K)$ and $B=(b_{ij})\in M_n(K)$, then $(A\otimes B)_{(i_1,i_2),(j_1,j_2)}=a_{i_1j_1}b_{i_2j_2}$, where $A\otimes B$ is the Kronecker product of $A$ and $B$.

The following is a variant of the Kronecker product for functions.

\begin{defn} Let $\phi=(\phi_{ij}):K\rightarrow M_m(K)$ and $\psi=(\psi_{ij}):K\rightarrow M_n(K)$ be functions.  Then we define their \emph{Kronecker composition} $\phi\otimes\psi:K\rightarrow M_{mn}(K)$ by the rule $(\phi\otimes\psi)_{(i_1,i_2),(j_1,j_2)}=\phi_{i_1j_1}\circ\psi_{i_2j_2}$.  Similarly, if $A=(a_{ij})\in M_n(K)$, then we define $\phi\otimes A\in M_{mn}(K)$ to be the matrix given by $(\phi\otimes A)_{(i_1,i_2),(j_1,j_2)}=\phi_{i_1j_1}(a_{i_2j_2})$.  Note that if $x\in K$, then $(\phi\otimes\psi)(x)=\phi\otimes(\psi(x))$, so that the two definitions are consistent with each other.  Finally, if $B\in M_m(K)$, then we define the functions $\phi B$ and $B\phi$ by $(\phi B)(x)=\phi(x)B$ and $(B\phi)(x)=B\phi(x)$, respectively.
\end{defn}

The utility of the Kronecker composition in understanding tensor products of bimodules is revealed in the following lemma.  In particular, it implies that when $\phi:K\rightarrow M_m(K)$ and $\psi:K\rightarrow M_n(K)$ are homomorphisms, so too is $\phi\otimes\psi:K\rightarrow M_{mn}(K)$.

\begin{lemma}  Given homomorphisms $\phi:K\rightarrow M_m(K)$ and $\psi:K\rightarrow M_n(K)$, we have $_1K^m_\phi\otimes{_1K^n_\psi}\cong {_1K^{mn}_{\phi\otimes\psi}}.$
\end{lemma}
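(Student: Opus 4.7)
The plan is to pick a canonical left-$K$-basis for $V\otimes_K W$, where $V={_1K^m_\phi}$ and $W={_1K^n_\psi}$, compute the right action of $K$ in that basis, and observe that the resulting matrix of right multiplication by $x$ is literally $(\phi\otimes\psi)(x)$. The bimodule isomorphism then follows at once, and the claim that $\phi\otimes\psi$ is a homomorphism is a free consequence, since any map realizing the right action of $K$ on a two-sided vector space is automatically a $k$-algebra homomorphism.

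Let $\{e_1,\dots,e_m\}$ and $\{f_1,\dots,f_n\}$ be the standard left bases of $V$ and $W$. The tensor product $V\otimes_K W$ is formed with the right $K$-action on $V$ and the left $K$-action on $W$, so $\{e_i\otimes f_j : 1\le i\le m,\ 1\le j\le n\}$ is a left $K$-basis. Ordering this basis lexicographically identifies $V\otimes_K W$ with $K^{mn}$ as a left $K$-module by sending $e_i\otimes f_j$ to the standard basis vector indexed by $(i,j)$.

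Now compute the right action of $x\in K$, using $e_i\cdot y=\sum_l \phi_{il}(y)\,e_l$ and $f_j\cdot y=\sum_k \psi_{jk}(y)\,f_k$, together with $v\alpha\otimes w = v\otimes\alpha w$:
\[
(e_i\otimes f_j)\cdot x \;=\; e_i\otimes\sum_k \psi_{jk}(x)f_k \;=\; \sum_k e_i\cdot\psi_{jk}(x)\otimes f_k \;=\; \sum_{k,l} \phi_{il}(\psi_{jk}(x))\,(e_l\otimes f_k).
\]
Thus the entry in position $((i,j),(l,k))$ of the matrix for right multiplication by $x$ is $\phi_{il}(\psi_{jk}(x)) = (\phi_{il}\circ\psi_{jk})(x)$, which is exactly the $((i,j),(l,k))$-entry of $(\phi\otimes\psi)(x)$ by definition of the Kronecker composition. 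This gives a bimodule isomorphism $V\otimes_K W\cong {_1K^{mn}_{\phi\otimes\psi}}$.

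The main obstacle is purely notational: one must be careful that scalars crossing the tensor use the right $K$-action on $V$ and the left $K$-action on $W$, and one must thread the multi-index bookkeeping through the lexicographic ordering so as to land on the Kronecker composition rather than its transpose. Once the basis and conventions are pinned down, the verification collapses to a one-line calculation, and the ``in particular'' statement that $\phi\otimes\psi$ is a homomorphism follows because right multiplication by $xy$ on ${_1K^m_\phi}\otimes{_1K^n_\psi}$ equals the composition of right multiplications by $x$ and $y$.
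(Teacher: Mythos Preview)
Your proof is correct and follows essentially the same approach as the paper: both pick the basis $\{e_i\otimes f_j\}$ ordered lexicographically, compute the right action of $x$ by first applying $\psi$ and then moving the resulting scalars across the tensor via the right action $\phi$, and read off the matrix entries as $\phi_{il}\circ\psi_{jk}$. The only difference is cosmetic (the names of the dummy summation indices are swapped), and your additional remark that $\phi\otimes\psi$ being a homomorphism follows for free from the bimodule structure matches the paper's observation preceding the lemma.
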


\begin{proof}  Let $\{e_1,\dots, e_m\}$ and $\{f_1,\dots, f_n\}$ be the standard left bases for $K^m$ and $K^n$, respectively. If we let $e_{(i,j)}=e_i\otimes f_j$, then $\{e_{(i,j)}:1\leq i\leq m,\ 1\leq j\leq n\}$ gives a left basis for $K^{mn}$.  We compute the right action of $K$ on $K^{mn}$ under this basis:
\begin{equation}
\begin{split}
e_{(i,j)}\cdot x&=(e_i\otimes f_j)\cdot x=e_i\otimes f_j\psi(x)=e_i\otimes\sum_{l=1}^n\psi_{jl}(x)f_l\\
&=\sum_{l=1}^ne_i\cdot\psi_{jl}(x)\otimes f_l=\sum_{l=1}^ne_i\phi(\psi_{jl}(x))\otimes f_l\\&=\sum_{l=1}^n\sum_{k=1}^m\phi_{ik}(\psi_{jl}(x))e_k\otimes f_l=\sum_{(k,l)}\phi_{ik}\circ\psi_{jl}(x)e_{(k,l)}.\\
&=\sum_{(k,l)}(\phi\otimes\psi)_{(i,j),(k,l)}(x)e_{(k,l)}=e_{(i,j)}(\phi\otimes\psi)(x).
\end{split}
\end{equation}
\end{proof}

\begin{lemma} \label{lemma.kron} Let $\phi:K\rightarrow M_m(K)$ and $\psi:K\rightarrow M_n(K)$ be homomorphisms and let $A=(a_{ij})\in M_m(K)$, $B=(b_{ij}),C=(c_{ij})\in M_n(K)$. Then the following hold:
\begin{enumerate}
\item $(\phi\otimes B)(\phi\otimes C)=\phi\otimes BC$.
\item $(A\otimes I_n)(\phi\otimes B)=(A\phi)\otimes B$ and $(\phi\otimes B)(A\otimes I_n)=(\phi A)\otimes B$, where $I_n$ is the $n\times n$ identity matrix.
\item If $\phi\sim\phi'$ and $\psi\sim\psi'$, then $\phi\otimes \psi\sim\phi'\otimes\psi'$.
\end{enumerate}
\end{lemma}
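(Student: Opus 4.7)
Parts (1) and (2) are direct entry-wise calculations in the multi-index notation, using only that $\phi$ is a $k$-algebra homomorphism (so its matrix entries are additive and satisfy $\phi_{ij}(xy) = \sum_k \phi_{ik}(x)\phi_{kj}(y)$). For (1), I would compute
\[
[(\phi\otimes B)(\phi\otimes C)]_{(i_1,i_2),(j_1,j_2)} = \sum_{k_1,k_2}\phi_{i_1k_1}(b_{i_2k_2})\phi_{k_1j_1}(c_{k_2j_2}),
\]
perform the inner $k_1$-sum via multiplicativity of $\phi$ to collapse it to $\phi_{i_1j_1}(b_{i_2k_2}c_{k_2j_2})$, and then use additivity in the outer $k_2$-sum to recognize the result as $\phi_{i_1j_1}((BC)_{i_2j_2}) = (\phi\otimes BC)_{(i_1,i_2),(j_1,j_2)}$. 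For (2), the Kronecker delta $\delta_{i_2k_2}$ coming from the $I_n$ factor in $A\otimes I_n$ immediately collapses one of the two sums, and what remains is precisely the definition of $(A\phi)_{i_1j_1}(b_{i_2j_2})$; the right-multiplication case is symmetric.

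Part (3) is where the first two parts pay off. Write $\phi'(y) = P\phi(y)P^{-1}$ and $\psi'(y) = Q\psi(y)Q^{-1}$ with $P\in GL_m(K)$ and $Q\in GL_n(K)$. I claim that the desired similarity is realized by conjugation with $R = P\otimes Q\in GL_{mn}(K)$. Applying part (2) with $A=P$ (and then with $A=P^{-1}$) to $B=\psi(x)$ yields
\[
(P\otimes I_n)(\phi\otimes\psi(x))(P^{-1}\otimes I_n) = (P\phi P^{-1})\otimes\psi(x) = \phi'\otimes\psi(x).
\]
Next I would show that conjugation by $I_m\otimes Q$ affects only the right tensor factor, transforming $\phi'\otimes\psi(x)$ into $\phi'\otimes(Q\psi(x)Q^{-1}) = (\phi'\otimes\psi')(x)$. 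This is either a standard Kronecker product identity or, if one prefers to avoid appealing outside the paper, an essentially identical entry-wise computation to that in part (2). Since $P\otimes Q = (P\otimes I_n)(I_m\otimes Q)$, composing the two conjugations gives $R(\phi\otimes\psi)(x)R^{-1} = (\phi'\otimes\psi')(x)$ for every $x\in K$, so $\phi\otimes\psi\sim\phi'\otimes\psi'$.

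The main obstacle is purely bookkeeping: keeping straight the two distinct roles played by each multi-index, namely the pair $(i_1,i_2)$ simultaneously labels a row of $\phi\otimes\psi(x)$ and picks out the entry of $\psi(x)$ to which a matrix entry of $\phi$ is applied. Once this is organized, every step reduces to additivity and multiplicativity of $\phi$ together with the elementary fact that $P\otimes Q$ factors through $P\otimes I_n$ and $I_m\otimes Q$.
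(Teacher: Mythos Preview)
Your arguments for (1) and (2) are correct and essentially identical to the paper's.  The gap is in (3): conjugation by $I_m\otimes Q$ does \emph{not} turn $\phi'\otimes\psi(x)$ into $\phi'\otimes(Q\psi(x)Q^{-1})$.  A direct calculation gives
\[
[(I_m\otimes Q)(\phi'\otimes\psi(x))]_{(i_1,i_2),(j_1,j_2)}
=\sum_{k_2}q_{i_2k_2}\,\phi'_{i_1j_1}\bigl(\psi_{k_2j_2}(x)\bigr),
\]
whereas the desired entry is $\phi'_{i_1j_1}\bigl(\sum_{k_2}q_{i_2k_2}\psi_{k_2j_2}(x)\bigr)$.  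These agree only if each $\phi'_{i_1j_1}$ is $K$-linear, but the matrix entries of a homomorphism $K\to M_m(K)$ are only $k$-linear in general.  The ``standard Kronecker product identity'' you invoke does not apply here because $\phi\otimes\psi$ is a Kronecker \emph{composition}, not a Kronecker product: the entry is $\phi_{i_1j_1}\circ\psi_{i_2j_2}$, not $\phi_{i_1j_1}\cdot\psi_{i_2j_2}$.

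The paper's fix is precisely what part (1) was set up to deliver: instead of $I_m\otimes Q$ one conjugates by $\phi\otimes Q$ (a matrix in $M_{mn}(K)$, invertible by part (1) since $(\phi\otimes Q)(\phi\otimes Q^{-1})=\phi\otimes I_n=I_{mn}$).  Then part (1) gives $(\phi\otimes Q)(\phi\otimes\psi(x))(\phi\otimes Q^{-1})=\phi\otimes(Q\psi(x)Q^{-1})=\phi\otimes\psi'(x)$, and a subsequent conjugation by $P\otimes I_n$ using part (2) finishes as you described.  So the overall conjugating matrix is $(P\otimes I_n)(\phi\otimes Q)$, not $P\otimes Q$.
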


\begin{proof}  (1) We compute the $(i_1,i_2),(j_1,j_2)$ component of $(\phi\otimes B)(\phi\otimes C)$:
\begin{equation}
\begin{split}
(\phi\otimes B)(\phi\otimes C)_{(i_1,i_2),(j_1,j_2)}&=\sum_{(k,l)}(\phi\otimes B)_{(i_1,i_2),(k,l)}(\phi\otimes C)_{(k,l),(j_1,j_2)}\\
&=\sum_{(k,l)}\phi_{i_1k}(b_{i_2l})\phi_{kj_1}(c_{lj_2})\\
&=\sum_l\phi_{i_1j_1}(b_{i_2l}c_{lj_2})\ \ \ \mbox{($\phi$ is a homomorphism)}\\
&=\phi_{i_1j_1}((BC)_{i_2j_2})=(\phi\otimes BC)_{(i_1,i_2),(j_1,j_2)}.
\end{split}
\end{equation}

(2) Again, the proof is a computation.  We show the first equality and leave the second to the reader.
\begin{equation}
\begin{split}
(A\otimes I_n)(\phi\otimes B)_{(i_1,i_2),(j_1,j_2)}&=\sum_{(k,l)}(A\otimes I_n)_{(i_1,i_2),(k,l)}(\phi\otimes B)_{(k,l),(j_1,j_2)}\\
&=\sum_{(k,l)}a_{i_1k}(I_n)_{i_2l}\phi_{kj_1}(b_{lj_2}).
\end{split}
\end{equation}
The only nonzero term in the sum occurs when $l=i_2$, because of the $(I_n)_{i_2l}$ term. Hence the above sum collapses to 
\begin{equation}
\sum_ka_{i_1k}\phi_{kj_1}(b_{i_2j_2})=(A\phi)_{i_1j_1}(b_{i_2j_2})=(A\phi\otimes B)_{(i_1,i_2),(j_1,j_2)}.
\end{equation}

(3)  First, suppose that $B\in M_n(K)$ is invertible.  Then by part (1), we have $(\phi\otimes B)(\phi\otimes B^{-1})=\phi\otimes BB^{-1}=\phi\otimes I_n=I_{mn}$.  Thus $\phi\otimes B$ is invertible, with inverse $\phi\otimes B^{-1}$.  Now, suppose that $B\psi(x)B^{-1}=\psi'(x)$ for all $x\in K$, and that $A\phi(x)A^{-1}=\phi'(x)$ for all $x\in K$.  Then, for all $x\in K$, we have
\begin{equation}
(A\otimes I_n)(\phi\otimes B)(\phi\otimes \psi(x))(\phi\otimes B^{-1})(A^{-1}\otimes I_n)=(\phi'\otimes \psi')(x)
\end{equation}
by parts (1) and (2) above.  Hence $\phi\otimes\psi\sim\phi'\otimes\psi'$.
\end{proof}

Any embedding $\lambda$ of $K$ into $\bar K$ can be lifted to an automorphism $\bar\lambda$ of $\bar K$, such that $\bar\lambda|_K=\lambda$.  The following lemma extends this to certain homomorphisms $\phi:K\rightarrow M_n(K)$.

\begin{lemma} If $\phi:K\rightarrow M_n(K)$ represents a simple bimodule, then there exists a homomorphism $\bar\phi:\bar K\rightarrow M_n(\bar K)$ such that $\bar\phi|_K=\phi$.
\end{lemma}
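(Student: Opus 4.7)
The plan is to exploit the diagonalization of $\phi$ over $\bar K$ established in Proposition \ref{prop.bij}. Since $_1K^n_\phi$ is simple, there exists an embedding $\lambda\in\Emb(K)$ with $|\lambda^G|=n$, and writing $\lambda^G=\{\lambda,\sigma_2\lambda,\dots,\sigma_n\lambda\}$, equation \eqref{similarity equation} provides a matrix $Q\in GL_n(\bar K)$ such that
\[Q\phi(x)Q^{-1}=\diag(\lambda(x),\sigma_2\lambda(x),\dots,\sigma_n\lambda(x))\]
for every $x\in K$.

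Next I would lift each of these embeddings to an automorphism of $\bar K$. Since $\bar K/K$ is algebraic and $\bar K$ is algebraically closed, each embedding $\sigma_i\lambda:K\rightarrow\bar K$ extends to some $\tau_i\in\Aut(\bar K)$ (with $\tau_1$ extending $\lambda$). These extensions need not be canonical, but any choice will do.

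Now define $\bar\phi:\bar K\rightarrow M_n(\bar K)$ by
\[\bar\phi(y)=Q^{-1}\diag(\tau_1(y),\tau_2(y),\dots,\tau_n(y))Q.\]
This is a homomorphism because the map $y\mapsto \diag(\tau_1(y),\dots,\tau_n(y))$ is a ring homomorphism from $\bar K$ into the commutative subring of diagonal matrices (it is additive and multiplicative componentwise, as each $\tau_i$ is), and conjugation by $Q^{-1}$ preserves these operations. For $x\in K$, we have $\tau_i(x)=\sigma_i\lambda(x)$ by construction, so
\[\bar\phi(x)=Q^{-1}\diag(\lambda(x),\sigma_2\lambda(x),\dots,\sigma_n\lambda(x))Q=Q^{-1}(Q\phi(x)Q^{-1})Q=\phi(x),\]
proving $\bar\phi|_K=\phi$.

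There is no real obstacle here; the only mild subtlety is that one might worry whether the simultaneous diagonalization from Proposition \ref{prop.bij} truly uses all of $\lambda^G$ on the diagonal (rather than some smaller multiset), but this is precisely what Lemma \ref{lemma.lambdaexact} together with equation \eqref{similarity equation} guarantees when $V$ is simple. Once the diagonal form is in hand, the extension is immediate because diagonal matrices reduce everything to a componentwise question about extending field embeddings.
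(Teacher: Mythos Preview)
Your proof is correct and is essentially identical to the paper's own argument: diagonalize $\phi$ over $\bar K$ via the orbit $\lambda^G$, lift each diagonal embedding to an automorphism of $\bar K$, and conjugate back. The only cosmetic difference is that the paper writes $\phi(x)=P\diag(\dots)P^{-1}$ where you write $Q\phi(x)Q^{-1}=\diag(\dots)$, so your $Q$ is the paper's $P^{-1}$.
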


\begin{proof} Write $_1K^m_\phi\cong V(\lambda)$ for some $\lambda^G\in\Lambda(K)$, and write $\lambda^G=\{\lambda_1,\dots,\lambda_m\}$.  Viewing $\phi$ as a function from $K$ to $M_m(\bar K)$, there exists $P\in GL_m(\bar K)$ such that $\phi(x)=P\diag(\lambda_1(x),\dots,\lambda_m(x))P^{-1}$ for all $x\in K$.  Lift each $\lambda_i$ to $\bar\lambda_i:\bar K\rightarrow \bar K$, and define $\bar\phi$ by the formula
\begin{equation}
\bar\phi(x)=P\diag(\bar\lambda_1(x),\dots,\bar\lambda_m(x))P^{-1}.
\end{equation}
Then one easily checks that $\bar\phi$ is a lift of $\phi$.
\end{proof}

The above result obviously extends to semisimple bimodules by induction, but we will only need to apply it in the case where $V$ is simple.   

\begin{thm} Let $\lambda^G,\mu^G\in \Lambda(K)$.  Then $V(\lambda)\otimes V(\mu)$ is semisimple.\label{semisimple thm}
\end{thm}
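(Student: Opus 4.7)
The plan is to diagonalize $\phi\otimes\psi$ over $\bar K$ and then descend the resulting block decomposition to $K$ via Lemma~\ref{Piziak lemma}. Fix representing homomorphisms $\phi:K\to M_m(K)$ and $\psi:K\to M_n(K)$ for $V(\lambda)$ and $V(\mu)$, with $m=|\lambda^G|$ and $n=|\mu^G|$, so that $V(\lambda)\otimes V(\mu)\cong{_1K^{mn}_{\phi\otimes\psi}}$ by the preceding lemma. Using the lifting lemma, I would extend $\phi,\psi$ to homomorphisms $\bar\phi:\bar K\to M_m(\bar K)$ and $\bar\psi:\bar K\to M_n(\bar K)$ admitting $\bar K$-diagonalizations $\bar\phi\sim_{\bar K}\diag(\bar\lambda_1,\dots,\bar\lambda_m)$ and $\bar\psi\sim_{\bar K}\diag(\bar\mu_1,\dots,\bar\mu_n)$, where the $\bar\lambda_i$ restrict to the elements of $\lambda^G$ on $K$, and similarly for the $\bar\mu_j$.

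Applying Lemma~\ref{lemma.kron}(3) over $\bar K$ then gives $\bar\phi\otimes\bar\psi\sim_{\bar K}\diag(\bar\lambda_1,\dots,\bar\lambda_m)\otimes\diag(\bar\mu_1,\dots,\bar\mu_n)$. A direct computation from the definition of Kronecker composition shows this right-hand side equals $\diag_{(i,j)}(\bar\lambda_i\circ\bar\mu_j)$. Restricting to $x\in K$ and using $\bar\mu_j|_K=\mu_j$ yields
\[\phi\otimes\psi\sim_{\bar K}\diag_{(i,j)}(\bar\lambda_i\circ\mu_j),\]
whose $mn$ diagonal entries form a multiset of embeddings in $\Emb(K)$.

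The main obstacle is to show that each $G$-orbit $O$ of embeddings appears with constant multiplicity $c_O$ in this multiset, so that after a suitable permutation the diagonal decomposes as $\bigoplus_O\diag(O)^{c_O}$, each orbit appearing in full. This I would establish by a $\sigma$-semilinear argument: since $\im(\phi\otimes\psi)\subset M_{mn}(K)$ is fixed pointwise by $G$, for each $\sigma\in G$ the coordinate-wise action $v\mapsto\sigma v$ on $\bar K^{mn}$ satisfies $(\sigma v)(\phi\otimes\psi)(x)=\sigma\eta(x)(\sigma v)$ whenever $v(\phi\otimes\psi)(x)=\eta(x)v$ (exactly as in the computation of Lemma~\ref{lemma.lambdabound}), so it restricts to a $\sigma$-semilinear bijection between the common eigenspaces $E_\eta$ and $E_{\sigma\eta}$ in $\bar K^{mn}$. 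Such a bijection preserves $\bar K$-dimension, and this dimension is precisely the multiplicity of $\eta$ in the diagonal.

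With constant orbit multiplicities established, equation~\eqref{similarity equation} from the proof of Proposition~\ref{prop.bij} shows that each $\diag(O)$ is $\bar K$-similar to a matrix homomorphism $\phi_O:K\to M_{|O|}(K)$ representing the simple $V(O)$. Hence $\phi\otimes\psi\sim_{\bar K}\bigoplus_O\phi_O^{c_O}$, which represents the semisimple bimodule $\bigoplus_O V(O)^{c_O}$. Lemma~\ref{Piziak lemma} then promotes this to $K$-similarity, yielding $V(\lambda)\otimes V(\mu)\cong\bigoplus_O V(O)^{c_O}$, which is semisimple.
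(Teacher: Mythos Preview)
Your proof is correct and follows the same route as the paper: diagonalize $\phi\otimes\psi$ over $\bar K$ via the lifted homomorphisms, recognize the resulting multiset of diagonal embeddings as a union of full $G$-orbits with multiplicities, and then descend the semisimple decomposition to $K$ via Lemma~\ref{Piziak lemma}. The one difference is that you explicitly justify the constant-multiplicity-on-orbits step using the $\sigma$-semilinear $G$-action on the common eigenspaces (a clean argument in the spirit of Lemma~\ref{lemma.lambdabound}), whereas the paper simply asserts the partition $\{\nu_{ij}\}=\bigcup_k(m_k)\nu_k^G$ without further comment; your version is thus slightly more self-contained, and it also avoids the paper's separate treatment of the finite-field case.
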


\begin{proof} If $K$ is finite, then each of $\lambda$ and $\mu$ is an automorphism of $K$, and $V(\lambda)\otimes V(\mu)\cong V(\lambda\mu)$ is simple.  So we may assume that $K$ is infinite. Enumerate the elements of $\lambda^G$ and $\mu^G$ as $\{\lambda_1,\dots,\lambda_m\}$ and $\{\mu_1,\dots,\mu_n\}$ respectively, and let $\bar\lambda_i$ and $\bar\mu_j$ be lifts of $\lambda_i$ and $\mu_j$ to automorphisms of $\bar K$.  If we write $V(\lambda)\cong{_1K^m_\phi}$ and $V(\mu)\cong{_1K^n_\psi}$, then the previous lemma shows that there are lifts $\bar\phi:\bar K\rightarrow M_m(\bar K)$ and $\bar\psi:\bar K\rightarrow M_n(\bar K)$, such that $\bar\phi\sim\diag(\bar\lambda_1,\dots,\bar\lambda_m)$ and $\bar\psi\sim\diag(\bar\mu_1,\dots,\bar\mu_n)$.  It follows from Lemma \ref{lemma.kron} and an elementary calculation that $\bar\phi\otimes\bar\psi\sim\diag(\bar\lambda_i\bar\mu_j:1\leq i\leq m,\ 1\leq j\leq n)$.  

For each pair $(i,j)$, let $\nu_{ij}=\bar\lambda_i\bar\mu_j|_K$.  Then $\nu_{ij}\in\Emb(K)$ and $\nu_{ij}^G\in\Lambda(K)$.  Moreover, an easy calculation shows that $\bar\phi\otimes\bar\psi|_K=\phi\otimes\psi$, and from this we conclude that $\phi\otimes\psi\sim_{\bar K} \diag(\nu_{ij})$.  Partition the multiset $\{\nu_{ij}\}$ into a union of disjoint orbits, counting multiplicities, say $\{\nu_{ij}\}=\bigcup_{k=1}^t(m_k)\nu_k^G$, where $(m_k)\nu_k^G$ means $m_k$ copies of $\nu_k^G$.  Let $V=\oplus_{k=1}^tV(\nu_k)^{(m_k)}$ and write $V\cong{_1K^{mn}_\theta}$ for some $\theta$. Then by construction we have that $\phi\otimes\psi\sim_{\bar K}\theta$; by Lemma \ref{Piziak lemma} $\phi\otimes\psi\sim\theta$, so that $V(\lambda)\otimes V(\mu)\cong\oplus_{k=1}^t V(\nu_k)^{(m_k)}$ is semisimple.
\end{proof}

The above theorem yields a presentation for the ring $K_0^B(K)$ by generators and relations.  We distinguish between the trivial simple bimodule $K$ which corresponds to $\{\Id_K\}\in\Lambda(K)$ and acts as the identity of $K_0^B(K)$, and the nontrivial simple bimodules $\{V(\lambda):\lambda^G\neq\{\Id_K\}\}$.

\begin{cor} Write $\Lambda(K)=\{\Id_K\}\cup(\bigcup_{i\in I}\lambda_i^G)$ as a union of disjoint orbits, and for each pair $i,j$, write $V(\lambda_i)\otimes V(\lambda_j)\cong K^{(\alpha_{ij})}\oplus(\bigoplus_{l\in I}V(\lambda_l)^{(\alpha_{ijl})})$ for nonnegative integers $\alpha_{ij},\alpha_{ijl}$.  Then $K_0^B(K)$ is isomorphic to the quotient of $\Z\langle\{ x_i:i\in I\}\rangle$ by the ideal $I$ generated by $\{x_ix_j-(\sum_{l\in I}\alpha_{ijl}x_l+\alpha_{ij}):i,j\in I\}$.
\end{cor}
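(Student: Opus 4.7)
The plan is to exhibit $K_0^B(K)$ as the claimed quotient ring by first identifying its underlying abelian group structure, then showing the given tensor product relations suffice to recover the multiplicative structure. By the Devissage Theorem (as invoked for Theorem \ref{kgroup theorem}) together with Theorem \ref{simple theorem}, $K_0^B(K)$ is the free abelian group on the isomorphism classes of simple objects of $\Vect(K)$, i.e., on the basis $\{[K]\} \cup \{[V(\lambda_i)] : i \in I\}$. Since $[K]$ is the identity of the ring $K_0^B(K)$, I would define a ring homomorphism
\[
\Phi : \Z\langle x_i : i \in I\rangle \longrightarrow K_0^B(K), \qquad x_i \longmapsto [V(\lambda_i)],
\]
sending $1$ to $[K]$, and aim to show that $\ker \Phi$ is exactly the ideal $I$ in the statement.

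For the containment $I \subseteq \ker\Phi$, Theorem \ref{semisimple thm} implies $V(\lambda_i)\otimes V(\lambda_j)$ is semisimple, and the given decomposition together with uniqueness of composition factors translates into the identity
\[
[V(\lambda_i)]\cdot[V(\lambda_j)] \;=\; \alpha_{ij}[K] + \sum_{l \in I} \alpha_{ijl}[V(\lambda_l)]
\]
in $K_0^B(K)$. Hence each defining relation $x_ix_j - (\sum_l \alpha_{ijl}x_l + \alpha_{ij})$ lies in $\ker\Phi$, so $\Phi$ factors through a surjective ring map $\bar\Phi : R := \Z\langle x_i : i\in I\rangle / I \to K_0^B(K)$ (surjectivity being clear since the basis of $K_0^B(K)$ is visibly in the image).

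To show $\bar\Phi$ is injective, I would argue that $R$ is generated as an \emph{abelian} group by $\{1\} \cup \{x_i : i\in I\}$. The defining relations rewrite every two-letter product $x_ix_j$ as a $\Z$-linear combination of $1$ and the $x_l$, so by induction on word length every monomial in the $x_i$'s collapses to such a combination modulo $I$; concretely, given a word of length $\ell \geq 2$, one applies a defining relation to any adjacent pair of letters to obtain a $\Z$-linear combination of words of length $\ell - 1$ and length $\ell - 2$, and iterates. Since $\bar\Phi$ maps this spanning set to the $\Z$-basis $\{[K]\} \cup \{[V(\lambda_i)]\}$ of the free abelian group $K_0^B(K)$, $\Z$-linear independence of the image forces $\bar\Phi$ to be injective; this simultaneously shows the spanning set is a $\Z$-basis of $R$, and that $\bar\Phi$ is a ring isomorphism.

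The only point requiring care is the inductive reduction of monomials: one must check that the process terminates and that no unanticipated relations are introduced. This follows because each application of a defining relation strictly decreases the length of the longest monomial appearing, so the reduction terminates in finitely many steps, and the linear independence of $\{[K]\} \cup \{[V(\lambda_i)]\}$ in the target then ensures the reduced form is well-defined. No commutativity needs to be imposed by hand, since the relations $x_ix_j \equiv x_jx_i$ in $R$ follow automatically from the symmetry $\alpha_{ij}=\alpha_{ji}$, $\alpha_{ijl}=\alpha_{jil}$, which in turn is forced by the commutativity of $\otimes$ on classes in $K_0^B(K)$ combined with the unique decomposition established above.
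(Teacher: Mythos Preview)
Your proof is correct and is exactly the natural argument; the paper states this corollary without proof, treating it as an immediate consequence of Theorem \ref{semisimple thm} together with the identification of $K_0^B(K)$ as the free abelian group on the classes of simples. Your write-up simply makes explicit the two steps the paper leaves to the reader: that the defining relations hold in $K_0^B(K)$ (from semisimplicity of $V(\lambda_i)\otimes V(\lambda_j)$), and that they suffice (by the length-reduction argument collapsing every monomial to degree at most one, so that $\bar\Phi$ carries a spanning set to a $\Z$-basis).
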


The following example illustrates how one can use Theorem \ref{semisimple thm} to find an explicit presentation for $K_0^B(K)$.

\begin{example} 
Let $p$ be an odd prime and let $K=\Q(\rho)$, where $\rho$ is a real $p$-th root of 2.  As in Example \ref{example.rho}, $\Emb(K)$ is partitioned into two orbits: $\Emb(K)=\{\Id_K\}\cup\lambda^G$, where $\lambda$ is the embedding defined by $\lambda(\rho)=\zeta\rho$. Now, $\Aut(K(\zeta)/K)$ is cyclic of order $p-1$; let $\sigma$ be a generator for $\Aut(K(\zeta)/K)$.  To be precise, let $\sigma(\zeta)=\zeta^n$, where $n$ is a multiplicative generator for $(\Z/p\Z)^*$.  There are obvious lifts of $\lambda$ and $\sigma$ to automorphisms of $\bar K$; we abuse notation and denote these lifts by $\lambda$ and $\sigma$ as well.

There are exactly two simple bimodules up to isomorphism:  The trivial bimodule $K$, and the $p-1$-dimensional bimodule $V(\lambda)$.  In order to calculate the ring structure on $K_0^B(K)$, we must decompose $V(\lambda)\otimes V(\lambda)$ as a direct sum of simples.  

If we write $V(\lambda)\cong{_1K^{p-1}_\phi}$, then $\phi\sim_{\bar K}\diag(\sigma^i\lambda:0\leq i\leq p-2)$.  Hence $\phi\otimes\phi\sim_{\bar K} \diag(\sigma^i\lambda\sigma^j\lambda:0\leq i,j\leq p-2)$.  So, we must count the number of times that $\sigma^i\lambda\sigma^j\lambda|_K=\Id_K$.  

We compute:
\[\sigma^i\lambda\sigma^j\lambda(\rho)=\sigma^i\lambda\sigma^j(\zeta\rho)=\sigma^i\lambda(\zeta^{n^j}\rho)=\sigma^i(\zeta^{n^j+1}\rho)=\zeta^{n^i(n^j+1)}\rho.\]

So, we must have $n^i(n^j+1)\equiv 0\pmod{p}$.  Since $(n,p)=1$, this only happens when $n^j+1\equiv 0\pmod{p}$, and since $n$ is a multiplicative generator for $(\Z/p\Z)^*$, this only happens for $j=(p-1)/2$.  For this value of $j$, we see that $\sigma^i\lambda\sigma^{(p-1)/2}\lambda|_K=\Id_K$ for \emph{all} $i$; in particular, there are exactly  $p-1$ copies of the trivial bimodule as a summand of $V(\lambda)\otimes V(\lambda)$.  

The rest is a dimension count:  Since $\dim V(\lambda)\otimes V(\lambda)=(p-1)^2$ and $V(\lambda)\otimes V(\lambda)\cong K^{(p-1)}\oplus V(\lambda)^{(t)}$, it follows that $t=p-2$; i.e. $V(\lambda)\otimes V(\lambda)\cong K^{(p-1)}\oplus V(\lambda)^{(p-2)}$.

From this we conclude that $K_0^B(K)\cong \Z[x]/(x^2-(p-2)x-(p-1))$.\qed
\end{example}

We conclude this section with a brief discussion of an alternative, ``naive" approach to the Grothendieck ring of $\Vect(K)$.  Namely, one could consider the free abelian group on isomorphism classes in $\Vect(K)$, modulo only those relations induced by direct sums (instead of all exact sequences).  We denote this ring by $K_0^\oplus(K)$.  Our aim is to show that, while $K_0^B(K)$ is computable in many cases, $K_0^\oplus(K)$ is an intractable object of study.  We begin with a definition.

\begin{defn}  A \emph{higher $k$-derivation of order $m$} (or an \emph{$m$-derivation}) on $K$ is a sequence of $k$-linear maps $\textbf{d}=\{d_0,d_1,\dots, d_m\}$, such that $d_l(xy)=\sum_{i+j=l}d_i(x)d_j(y)$ for all $x,y\in K$.  (In particular $d_0:K\rightarrow K$ is an endomorphism and $d_1$ is a $d_0$-derivation.)  We denote the set of all $n$-derivations by $HS_n(K)$, and the set of all higher derivations (of all orders) by $HS(K)$.  We refer the reader to  \cite[Section 27]{Matsumura} for more information on higher derivations.  (Note that our definition is slightly more general, in that \cite{Matsumura} assumes that $d_0=\Id_K$).
\end{defn}
 
Note that $HS(K)$ can be made into an abelian semigroup with identity as follows:  Given $\textbf{d}=\{d_0,\dots,d_m\}$ and $\textbf{d$'$}=\{d_0',\dots, d_n'\}$, we define $\textbf{d$\cdot$d$'$}=\{\delta_0,\dots, \delta_{m+n}\}$, where $\delta_l=\sum_{i+j=l}d_id_j'$.  (Here we set $d_i=0$ for $i>m$ and $d_j'=0$ for $j>n$.)  The above operation actually makes $HS(K)$ a group, but we will not need this fact below.

Given $\textbf{d}=\{d_0,d_1,\dots, d_m\}$, we define a map $\phi(\textbf{d}):K\rightarrow M_{m+1}(K)$ by
\begin{equation}
\phi(\textbf{d})(x)=\begin{pmatrix}d_0(x)&d_1(x)&\dots&d_m(x)\\ 0&d_0(x)&\ddots&\vdots\\ \vdots&\ddots&\ddots&d_1(x)\\ 0&\dots&0&d_0(x)\end{pmatrix}.
\end{equation}
That is, $\phi(\textbf{d})(x)$ is an upper triangular Toepliz matrix, whose entry on the $i$-th superdiagonal is $d_i(x)$.  The fact that $\textbf{d}\in HS_m(K)$ is precisely the condition that $\phi(\textbf{d})$ is a homomorphism.  It is fairly easy to see that the two-sided vector space $V(\textbf{d})={_1K^{m+1}_{\phi(\textbf{d})}}$ is indecomposable in $\Vect(K)$.  Conversely, if $\phi:K\rightarrow M_{m+1}(K)$ is a homomorphism such that $\phi(x)$ is an upper triangular Toepliz matrix for all $x\in K$, then $\textbf{d}=\{d_0,\dots, d_m\}\in HS_m(K)$, where $d_i(x)$ is the $i$-th superdiagonal of $\phi(x)$.

It follows readily that there is an abelian semigroup homomorphism $\Psi:\Z[HS(K)]\rightarrow K_0^\oplus(K)$.  However, $\Psi$ is in general neither injective nor surjective, and is also not a ring homomorphism.

For instance, let $\textbf{d}=\{d_0,d_1\}$ and let $\textbf{d$'$}=\{d_0,xd_1\}$ for some $x\in K^*$.  Then conjugating $\phi(\textbf{d})$ by $\diag(x,1)$ shows that $V(\textbf{d})\cong V(\textbf{d$'$})$ and so $\Psi$ is not injective.  Similarly, let $V={_1K^3_\phi}$, where 
\[\phi(x)=\begin{pmatrix} d_0(x)&d_1(x)&d_1(x)\\0&d_0(x)&0\\0&0&d_0(x)\end{pmatrix}.\]
Then $V$ is indecomposable but is not in the image of $\Psi$, so $\Psi$ is not surjective.

The fact that $\Psi$ is not a ring homomorphism is easy:  If $\textbf{d}\in HS_m(K)$ and $\textbf{d$'$}\in HS_n(K)$, then $\textbf{d$\cdot$d$'$}\in HS_{m+n}(K)$ and so the left dimension of $V(\textbf{d$\cdot$d$'$})$ is $m+n+1$.  On the other hand, the left dimension of $V(\textbf{d})\otimes V(\textbf{d$'$})$ is $(m+1)(n+1)$.

The above remarks show that in general $K_0^\oplus(K)$ is a more intractable object of study than $K_0^B(K)$, and that its structure depends on significantly subtler arithmetic properties of the field $K$.

\section{Representatives for equivalence classes of matrix homomorphisms}
Let $\phi:K\rightarrow M_n(K)$ be a homomorphism.  In this final section, we consider the problem of finding a representative for the $\sim$-equivalence class of $\phi$ that has a particularly ``nice" form.  

For example, suppose that $\phi(y)$ has all of its eigenvalues in $K$ for some $y\in K$. Then there exists $P\in GL_n(K)$ such that $P\phi(y)P^{-1}$ is in Jordan canonical form.  Let $\lambda_1,\dots, \lambda_t$ be the distinct eigenvalues of $\phi(y)$, with corresponding multiplicities $m_1,\dots, m_t$.  For each $i$, let $n_{i,1},\dots, n_{i,s_i}$ be the sizes of the $\lambda_i$-Jordan blocks of $P\phi(y)P^{-1}$.  Then \cite[Section VIII.2]{gant} implies the following result. (We say that an $m\times n$ matrix $A$ is \emph{generalized upper triangular Toepliz} if it is of the form $\begin{pmatrix} 0& T \end{pmatrix}$ or $\begin{pmatrix} T\\0\end{pmatrix}$, where $T$ is an upper triangular Toepliz matrix.)

\begin{thm}  Assume the above notation.  For all $x\in K$, 
\begin{equation}
P\phi(x)P^{-1}=\diag(\phi_1(x),\dots,\phi_t(x)),\label{Gantmacher equation}
\end{equation}
where each $\phi_i(x)$ is an $m_i\times m_i$-block matrix of the form $\phi_i(x)=(T_{ipq}(x))$, where $T_{ipq}(x)$ is a generalized upper triangular Toepliz matrix of size $n_{i,p}\times n_{i,q}$.\label{Gantmacher thm}
\end{thm}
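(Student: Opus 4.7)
The plan is to reduce the claim to the cited commutant description in \cite[Section VIII.2]{gant}. The only input coming from our setting is the elementary observation that the image of a ring homomorphism $\phi\colon K\to M_n(K)$ consists of pairwise commuting matrices, since $K$ is commutative: indeed
\[
\phi(x)\phi(y)=\phi(xy)=\phi(yx)=\phi(y)\phi(x)\qquad\text{for all }x,y\in K.
\]

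Concretely, I would fix the element $y$ as in the statement, choose $P\in GL_n(K)$ so that $J:=P\phi(y)P^{-1}$ is in Jordan canonical form, and put $\psi(x):=P\phi(x)P^{-1}$. Then $\psi\colon K\to M_n(K)$ is again a homomorphism with $\psi(y)=J$, and by the commuting observation $\psi(x)$ lies in the centralizer of $J$ in $M_n(K)$ for \emph{every} $x\in K$, not just for $x=y$. Proving the theorem is then the same as showing that every element of the centralizer of $J$ has the block form described in \eqref{Gantmacher equation}.

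At this point I would invoke \cite[Section VIII.2]{gant} directly. The coarse block-diagonal splitting $\psi(x)=\diag(\phi_1(x),\dots,\phi_t(x))$ reflects the fact that, because the eigenvalues $\lambda_1,\dots,\lambda_t$ of $J$ are distinct, any matrix commuting with $J$ must preserve each generalized eigenspace; after the chosen basis these are precisely the coordinate subspaces of dimensions $m_1,\dots,m_t$. Within the $\lambda_i$-block, the nilpotent part $J_i-\lambda_iI_{m_i}$ is the direct sum of nilpotent Jordan blocks of sizes $n_{i,1},\dots,n_{i,s_i}$, and Gantmacher's description of the commutant of such a nilpotent direct sum is exactly that of an $s_i\times s_i$ array of blocks $T_{ipq}$, where $T_{ipq}$ has size $n_{i,p}\times n_{i,q}$ and is a generalized upper triangular Toeplitz matrix in the sense recalled immediately before the theorem.

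I do not anticipate any serious obstacle: the substantive content is packaged in the cited theorem, and our contribution at this stage is the trivial remark that commutativity of $K$ forces every $\psi(x)$ into the commutant of $J$ simultaneously. The only point that calls for care is matching the convention for ``generalized upper triangular Toeplitz'' in \cite{gant} with the one introduced in the paragraph preceding the theorem statement; this amounts to checking that the solutions $X$ to $X N_q = N_p X$ for standard nilpotent Jordan blocks $N_p$, $N_q$ of sizes $n_{i,p}$ and $n_{i,q}$ are exactly the generalized upper triangular Toeplitz matrices of the stated shape, which is a routine direct computation.
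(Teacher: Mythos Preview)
Your proposal is correct and matches the paper's approach exactly: the paper does not give a standalone proof but simply cites \cite[Section VIII.2]{gant} and remarks afterward that ``the above theorem uses nothing more than the description of the set of all matrices which commute with a given matrix in Jordan canonical form.'' Your write-up is in fact more explicit than the paper's, spelling out the commutativity observation and the reduction to the centralizer that the authors leave implicit.
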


The above theorem uses nothing more than the description of the set of all matrices which commute with a given matrix in Jordan canonical form; in particular it does \emph{not} use the additional information that $\phi$ is a homomorphism, or that the matrices in $\im\phi$ also commute with each other.  Consequently one can often find a better representation than the one afforded by Theorem \ref{Gantmacher thm}.

\begin{example}  Suppose that $\phi:K\rightarrow M_3(K)$ is such that $\phi(y)=\begin{pmatrix} \lambda&1&0\\0&\lambda&0\\0&0&\lambda\end{pmatrix}$ for some $y\in K$.  Then $\phi(y)$ is in Jordan canonical form, so Theorem \ref{Gantmacher thm}  shows that there exist functions $a,b,c,d,e:K\rightarrow K$ such that 
\begin{equation}
\phi(x)=\begin{pmatrix} a(x)&b(x)&c(x)\\0&a(x)&0\\0&d(x)&e(x)\end{pmatrix}.
\end{equation}
Writing out the condition that $\phi$ is a homomorphism shows that each of $a$ and $e$ are (nonzero) homomorphisms from $K$ to $K$. If we conjugate $\phi(x)$ by the matrix $P=\begin{pmatrix}1&0&0\\0&0&1\\0&1&0\end{pmatrix}$, we see that  $\phi(x)\sim \psi(x)=\begin{pmatrix} a(x)&c(x)&b(x)\\0&e(x)&d(x)\\0&0&a(x)\end{pmatrix}$.  If we let $V={_1K^3_\psi}$, then the composition factors of $V$ are $\{{_1K_a},{_1K_e},{_1K_a}\}$.

Suppose first that $a=e$. Then the fact that $\psi$ is a homomorphism implies that 
$b(x_1x_2)=a(x_1)b(x_2)+c(x_1)d(x_2)+b(x_1)a(x_2)$ for all $x_1,x_2\in K$.  Since $b(x_1x_2)=b(x_2x_1)$ we can equate terms and get that $c(x_1)d(x_2)=c(x_2)d(x_1)$. If $c\neq 0$, then choosing $x_2$ so that $c(x_2)\neq 0$, we see that $d(x)=\alpha c(x)$, where $\alpha=d(x_2)/c(x_2)$.  If $\a\neq 0$, then we can conjugate $\psi$ by $Q=\diag(1,1,\a)$ to conclude that $\phi\sim\begin{pmatrix} a&c&\frac{1}{\a}b\\0&a&c\\0&0&a\end{pmatrix}$. If $\a=0$, then $d=0$ and so $\phi\sim\begin{pmatrix}a&c&b\\0&a&0\\0&0&a\end{pmatrix}$.  Finally, if $a\neq e$ then the fact that there are no nontrivial extensions between nonisomorphic simples shows that $\phi\sim\begin{pmatrix} a&b&0\\0&a&0\\0&0&e\end{pmatrix}$. Thus we conclude that $\phi$ is equivalent to a homomorphism as in Theorem \ref{Gantmacher thm} that is also upper triangular.\qed
\end{example}

Motivated by the above example, we may ask whether a homomorphism $\phi$ is always equivalent to an upper triangular homomorphism or, ideally, an upper triangular homomorphism of the form (\ref{Gantmacher equation}).  Assuming that the matrices in $\im\phi$ have their eigenvalues in $K$, the answer to the first question is ``yes" \cite[p. 100]{humphreys}.  We shall prove that, under certain additional assumptions, the matrices in $\im\phi$ have upper triangular Toepliz diagonals.  We then derive a sufficient condition for an affirmative answer to the second question.  We begin with some elementary reductions.

Given $V\in\Vect(K)$, let $S_1,\dots, S_t$ be a complete list of the pairwise nonisomorphic composition factors of $V$.  Since $\Ext^1(S_i,S_j)=0$ for $i\neq j$, we see that $V\cong V_1\oplus\dots\oplus V_t$, where each $V_i$ has each of its composition factors isomorphic to $S_i$.  Now, if $\phi$ represents $V$ and $\phi_i$ represents $V_i$ for each $i$, then it is clear that $\phi\sim\diag(\phi_1,\dots,\phi_t)$. Thus it suffices to consider the case where the composition factors of $_1K^n_\phi$ are pairwise isomorphic.  We shall further assume that the simple composition factor of $_1K^n_\phi$ is isomorphic to $_1K_a$ for some $a:K\rightarrow K$; we shall say that $\phi$ is \emph{$a$-homogeneous} in this case.  

\begin{lemma}  If $\phi:K\rightarrow M_n(K)$ is $a$-homogenous for some $a:K\rightarrow K$, then $\phi$ is equivalent to an upper triangular homomorphism with each diagonal entry equal to $a$.
\end{lemma}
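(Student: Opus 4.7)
The plan is to derive the triangular form directly from a composition series of $V = {_1K^n_\phi}$, without appealing to any outside result. Since $\phi$ is $a$-homogeneous, every composition factor of $V$ is isomorphic to ${_1K_a}$, so by the Jordan--Hölder theorem $V$ admits a composition series
\[
0 = V_0 \subset V_1 \subset \cdots \subset V_n = V,
\]
with each sub-quotient $V_i/V_{i-1} \cong {_1K_a}$ as a bimodule. Since $V_i$ has left dimension $i$, I would choose for each $i$ a vector $e_{n+1-i} \in V_i$ whose image spans $V_i/V_{i-1}$; the set $\{e_1,\dots,e_n\}$ is then a left basis for $V$, re-indexed so that $V_j = \span\{e_{n+1-j},\dots,e_n\}$ for each $j$. (The index reversal is chosen so the convention ``row vector times $\phi(x)$'' yields upper-triangular rather than lower-triangular form.)

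Next, I would read off the matrix of $\phi(x)$ in this basis. Since $e_i$ lies in the sub-bimodule $V_{n+1-i} = \span\{e_i,e_{i+1},\dots,e_n\}$, right multiplication by $x$ keeps $e_i \cdot x$ inside $V_{n+1-i}$, so writing $e_i\cdot x = \sum_j \phi_{ij}(x)e_j$ forces $\phi_{ij}(x) = 0$ whenever $j < i$; that is, $\phi(x)$ is upper triangular. Passing to the quotient $V_{n+1-i}/V_{n-i}$, which is isomorphic to ${_1K_a}$ and spanned by the class of $e_i$, the right action of $x$ is multiplication by $a(x)$, which pins down $\phi_{ii}(x) = a(x)$. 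Since the change of basis from the original basis to $\{e_1,\dots,e_n\}$ is effected by an element of $GL_n(K)$, this produces the required $\sim$-equivalence to an upper triangular homomorphism with each diagonal entry equal to $a$.

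The argument presents no genuine obstacle: the only minor nuisance is the index reversal needed to align the filtration with the row-vector convention used throughout the paper, which amounts to an extra conjugation by the antidiagonal permutation matrix. In particular, the simultaneous triangularization result of Humphreys cited in the excerpt is \emph{not} required here; the existence of a composition series with uniform factors already supplies a basis in which $\phi$ takes the stated triangular form. Conjugation preserves the property of being a ring homomorphism, so the resulting $P\phi P^{-1}$ is automatically a homomorphism, completing the proof.
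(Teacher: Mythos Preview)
Your proof is correct and is essentially the same as the paper's: both triangularize $\phi$ by choosing a left basis adapted to a filtration of $V$ by sub-bimodules with successive quotients ${_1K_a}$. The paper phrases this as an induction on $n$, peeling off a single copy of ${_1K_a}$ at each step, whereas you unroll the induction by fixing a full composition series at the outset; the content is identical.
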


\begin{proof} We proceed by induction on $n$, the case $n=1$ being trivial.  Let $V={_1K^n_\phi}$.  Then $_1K_a$ is a sub-bimodule of $V$, generated as a left subspace by a single vector $v$.  Choose a basis for $V$ containing $v$ and order it so that $v$ occurs last; then we see that, in this basis, $V\cong{_1K^n_{\tilde\phi}}$, where $\tilde\phi\sim\begin{pmatrix}\psi&\theta\\0&a\end{pmatrix}$ for some $\psi:K\rightarrow M_{n-1}(K)$.  Now, $_1K^{n-1}_\psi$ is also $a$-homogeneous and so by induction is equivalent to an upper triangular homomorphism with each diagonal entry equal to $a$.  The result follows.
\end{proof}

\begin{thm} Let $\phi:K\rightarrow M_n(K)$ be $a$-homogeneous for some $a:K\rightarrow K$.  Then there exist higher derivations ${\rm\bf d}_1,\dots,{\rm\bf d}_t$, each of whose $0$-th components is equal to $a$, such that 
\begin{equation}
\phi\sim \begin{pmatrix} A_{11} &A_{12}&\dots &A_{1t}\\ 0&A_{22}&\dots &A_{2t}\\ \vdots&\ddots &\ddots&\vdots\\0&\dots&0&A_{tt}\end{pmatrix}\label{upper triangular equation}
\end{equation}
where $A_{ii}(x)=\phi({\rm\bf d}_i)(x)$ and $A_{ij}(xy)=\sum_{l=1}^t A_{il}(x)A_{lj}(y)$ for all $x,y\in K$.\label{upper triangular thm}
\end{thm}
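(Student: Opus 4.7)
The plan is first to reduce to the case where $\phi$ is upper triangular with $a$ on every diagonal entry via the preceding lemma, and then to verify that the conclusion of the theorem is already realized by the trivial block decomposition in which each block has size one.

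Concretely, I would invoke the preceding lemma to obtain $P\in GL_n(K)$ such that $\psi(x):=P\phi(x)P^{-1}$ is upper triangular with $\psi_{ii}(x)=a(x)$ for every index $i$ and every $x\in K$. I would then take $t=n$ and ${\rm\bf d}_i=\{a\}$ (the trivial $0$-derivation) for each $i$, so that $A_{ii}(x):=\phi({\rm\bf d}_i)(x)=a(x)$ matches the diagonal entries $\psi_{ii}(x)$, and I would set $A_{ij}(x):=\psi_{ij}(x)$ for $i<j$, recovering the full upper-triangular matrix $\psi(x)$ in the stated block form. The compatibility relation $A_{ij}(xy)=\sum_{l=1}^{t}A_{il}(x)A_{lj}(y)$ would then be immediate as the $(i,j)$-entry of $\psi(xy)=\psi(x)\psi(y)$ (after using $A_{il}=0$ for $l<i$ and $A_{lj}=0$ for $l>j$), so no further work is required beyond invoking the homomorphism property of $\psi$.

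A coarser and more informative decomposition could be built by induction on $n$: at each stage I would identify a sub-bimodule of $V={}_1K^n_\psi$ isomorphic to some $V({\rm\bf d}_1)$ with $({\rm\bf d}_1)_0=a$ whose left dimension is as large as possible, pass to the quotient, and recurse. The hard part in such a refined procedure is exhibiting, at each step, an explicit change of basis within the corresponding prefix that brings the chosen sub-bimodule into the Toepliz form $\phi({\rm\bf d}_1)$; since the one-dimensional bimodule $V(\{a\})\cong{}_1K_a$ always embeds in any $a$-homogeneous two-sided vector space, this induction never stalls, and in the worst case it would reproduce the trivial block decomposition above.
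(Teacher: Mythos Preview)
Your argument is technically correct for the theorem as literally stated: a $0$-derivation $\{a\}$ is a higher derivation, $\phi(\{a\})(x)$ is the $1\times 1$ matrix $(a(x))$, and the preceding lemma already puts $\phi$ in upper triangular form with $a$ on the diagonal, so taking $t=n$ and $1\times 1$ blocks satisfies every clause of the statement. In that sense there is no gap.

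That said, your proof and the paper's proof are doing very different things. Your decomposition renders the theorem content-free: it says nothing beyond the preceding lemma, and in particular it gives no information about the Toepliz structure that motivates the whole section. The paper instead proves something substantive. After upper-triangularizing, it partitions the indices at precisely those positions where the first superdiagonal entry $\phi_{i,i+1}$ is identically zero; within each resulting diagonal block none of the superdiagonal entries vanish identically. The key computation is then that commutativity $\phi(xy)=\phi(yx)$ forces $\phi_{i+1,i+2}=\alpha_i\phi_{i,i+1}$ for nonzero scalars $\alpha_i$, so all superdiagonal entries of a block are nonzero scalar multiples of a single function $b$. Picking $y$ with $b(y)\neq 0$, one checks that $(\phi(y)-a(y)I)^{n-1}\neq 0$, so $\phi(y)$ has a \emph{single} Jordan block; conjugating that block into Jordan form and invoking Theorem~\ref{Gantmacher thm} then forces the entire block to be upper triangular Toepliz, i.e.\ equal to $\phi(\mathbf{d})$ for a genuine higher derivation of the full block size. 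This is exactly the structural content your trivial decomposition discards.

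Your second paragraph gestures at a refined induction, but the hard step you flag --- finding a maximal sub-bimodule of Toepliz type at each stage --- is precisely what the paper's superdiagonal-vanishing partition and single-Jordan-block argument accomplish, and your sketch does not supply a mechanism for it.
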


\begin{proof} The fact that $A_{ij}(xy)=\sum_lA_{il}(x)A_{lj}(y)$ follows because $\phi$ is a homomorphism; the key is to show that the diagonal matrices $A_{ii}$ have the stated form. By the previous lemma, we may assume without loss of generality that $\phi$ is upper triangular. Write $\phi=(\phi_{ij})$, where $\phi_{ii}=a$ for all $i$ and $\phi_{ij}=0$ for $i>j$.  Let $i_1\leq \dots\leq i_q$ be all of the indices for which $\phi_{i_k,i_k+1}=0$.  Then we can partition $\phi$ into blocks of size $i_1,i_2-i_1,\dots, i_q-i_{q-1},n-i_q$.  If we let $\phi_l$ denote the $l$-th diagonal block in this partition, then each $\phi_l$ has the properties that each of its diagonal entries is equal to $a$, and none of its first superdiagonal entries is identically $0$.

Replacing $\phi$ by $\phi_l$ we may assume without loss of generality that $\phi_{i,i+1}$ is not identically $0$ for any $i$.  After these reductions, we see that the theorem is trivially true when $n=1$ or $2$, so we assume without loss of generality that $n\geq 3$.  If we expand out $\phi_{i,i+2}(xy)$ using the fact that $\phi$ is a homomorphism and $\phi_{ij}=0$ for $i>j$, we obtain 
\begin{equation}\phi_{i,i+2}(xy)=\phi_{ii}(x)\phi_{i,i+2}(y)+\phi_{i,i+1}(x)\phi_{i+1,i+2}(y)+\phi_{i,i+2}(x)\phi_{i+2,i+2}(y)\end{equation}
and a similar equation for $\phi_{i,i+2}(yx)$.  Substituting $\phi_{ii}=\phi_{i+2,i+2}$ and using the fact that $\phi(xy)=\phi(yx)$, we can simplify the resulting equations to obtain
\begin{equation}\phi_{i,i+1}(x)\phi_{i+1,i+2}(y)=\phi_{i,i+1}(y)\phi_{i+1,i+2}(x)\end{equation}
for all $x,y\in K$.  If we choose $y$ such that $\phi_{i,i+1}(y)\neq 0$, then we have that $\phi_{i+1,i+2}(x)=\alpha_i\phi_{i,i+1}(x)$ for all $x\in K$, where $\alpha_i=\phi_{i+1,i+2}(y)/\phi_{i,i+1}(y)$. Note also that $\alpha_i\neq 0$ for any $i$ since we know that $\phi_{i+1,i+2}$ is not identically $0$.

Let $b=\phi_{12}$ and let $\beta_i=\prod_{j\leq i}\a_j$, so that 
\[\phi=\begin{pmatrix} a&b&&&*\\0&a&\beta_1b&&\\\vdots&0&a&\ddots &\\ \vdots& &\ddots &\ddots&\beta_{n-2}b\\0& \dots& \dots& 0&a\end{pmatrix}.\]
Choose $y\in K$ with $b(y)\neq 0$.  An elementary calculation shows that $(\phi(y)-a(y)I_n)^{n-1}$ is the matrix whose only nonzero entry is $\beta_1\dots\beta_{n-2}b(y)^{n-1}$ in its $(1,n)$-position.  This shows that the minimal polynomial for $\phi(y)$ is $(X-a(y))^n$, so that the Jordan canonical form for $\phi(y)$ is a single block of size $n$. If $P\in GL_n(K)$ is such that $P\phi(y)P^{-1}$ is in Jordan canonical form, then Theorem \ref{Gantmacher thm} shows that $P\phi(x) P^{-1}$ is an upper triangular Toepliz matrix with diagonal equal to $a(x)$ for all $x\in K$.  Thus there exists a higher derivation $\textbf{d}$ such that $P\phi P^{-1}=\phi(\textbf{d})$.  This shows that $\phi$ is equivalent to a matrix of the form \eqref{upper triangular equation}. 
\end{proof}

One may ask under what circumstances it is possible to obtain the best of both worlds:  That is, when can we conclude that $\phi$ is equivalent to an upper triangular representation as in \eqref{upper triangular equation}, and also have each $A_{ij}$ be a generalized upper triangular Toepliz matrix as in Theorem \ref{Gantmacher thm}?  Since the Toepliz condition arises out of commuting with a matrix in Jordan canonical form, the following would be a sufficient condition:

\begin{enumerate}
\item[($*$)] Given a homomorphism $\phi$, there exists $y\in K$ and $P\in GL_n(K)$ such that $P\phi(y)P^{-1}$ is in Jordan canonical form and $P\phi(x)P^{-1}$ is upper triangular for all $x\in K$. 
\end{enumerate}
If $\phi$ is an upper triangular homomorphism, then of course condition $(*)$ is satisfied if there exists $y\in K$ and an upper triangular $P\in GL_n(K)$ such that $P\phi(y)P^{-1}$ is in Jordan canonical form.

Condition $(*)$ is not automatic for a given $y$ and $\phi$.  The following example illustrates that, given $y$, there may be no $P$ such that $P\phi(y)P^{-1}$ is in Jordan canonical form and $P\phi(x)P^{-1}$ is upper triangular.

\begin{example} \label{example.fail}
Let $\textbf{d}=\{d_0,d_1,d_2\}$ be a $2$-derivation, and assume that $d_1 \neq 0$ and that there exists a $y \in K$ such that $d_1(y)=0$, $d_2(y)\neq 0$.  Define $\phi:K \rightarrow M_3(K)$ by
\[\phi(x)= \begin{pmatrix} d_{0}(x) & d_{2}(x) & d_{1}(x) \\  0 & d_{0}(x) & 0 \\ 0 & d_{1}(x)& d_{0}(x)\end{pmatrix}.\]
We claim there does not exist a basis in which $\phi(y)$ has Jordan canonical form and the image of $\phi$ is upper triangular. To establish this claim, we describe every $P\in GL_3(K)$ in which the image of $P\phi P^{-1}$ is upper triangular, and show that $P\phi(y)P^{-1}$ is not in Jordan canonical form for any such $P$.  

Since $d_1\neq 0$, it is not hard to see that the only simultaneous eigenvectors for $\im\phi$ are in $W=\span{(0,1,0)}$.  Similarly, the only simultaneous eigenvectors for $\im\phi$ acting on $K^3/W$ are in $\span\{(0,0,1)+W\}$.  From this we conclude that, if $\mathcal{B}$ is a basis with $\im P\phi P^{-1}$ upper triangular, then
\[\mathcal{B}=\{(0,f_{1},0), (0, f_{2}, f_{3}),(f_{4},f_{5},f_{6}): f_{1}, f_{3}, f_{4} \neq 0\}.\]

For such a basis $\mathcal{B}$, we have
\[P\phi(x)P^{-1} = \begin{pmatrix} d_{0}(x) & \frac{f_{4}}{f_{3}}d_{1}(x)  & (\frac{f_{6}f_{3}-f_{4}f_{2}}{f_{1}f_{3}})d_{1}(x)+\frac{f_{4}}{f_{1}}d_{2}(x) \\
0 & d_{0}(x) & \frac{f_{3}}{f_{1}}d_{1}(x)  \\ 0 & 0 & d_{0}(x) \end{pmatrix}.\]
By construction, $P\phi(y)P^{-1}$ is not in Jordan canonical form.

Note that higher derivations satisfying the given hypotheses do exist.  For example, let $K$ be the quotient field of $k[x,y,z]/(xy-z^2)$, where $k$ is a field of characteristic $2$.  In \cite[Example 1.2 and Theorem 1.5]{traves}, a nontrivial $\textbf{d}\in HS_2(K)$ is constructed such that $d_1(x-z)=0$ and $d_2(x-z)=x$.\qed
\end{example}

\begin{defn}  Let $A$ be an $n \times n$ upper triangular matrix with single eigenvalue $\lambda$, and let the Jordan canonical form of $A$ have block sizes $n_1\geq n_2\geq\dots\geq n_p$.  For each $i\leq n$, let $A_i$ be the matrix consisting of the first $i$ rows and columns of $A$.  We say that $A$ is \emph{Jordan-ordered} if, for all $i\leq n$, the dimension of the eigenspace of $A_i$ is $j$, where $j$ is the smallest integer such that $n_1+\dots+n_j\geq i$.
\end{defn}

\begin{example} Let $A=\begin{pmatrix}\lambda&0&1\\0&\lambda&0\\0&0&\lambda\end{pmatrix}$, so that the Jordan canonical form of $A$ has blocks of size $2$ and $1$.  Then $A$ is not Jordan-ordered, because the dimension of the eigenspace of $A_2=\begin{pmatrix}\lambda&0\\0&\lambda\end{pmatrix}$ is $2$ and not $1$.\qed
\end{example}

It is not hard to see that, if $A$ is in Jordan canonical form, then $A$ is Jordan-ordered if and only if the Jordan blocks of $A$ are arranged in decreasing size.

The following is our main result concerning Jordan-ordered matrices.

\begin{thm}  If $A\in M_n(K)$ is Jordan-ordered, then there exists an upper triangular $P\in GL_n(K)$ such that $PAP^{-1}$ is Jordan-ordered and is in Jordan canonical form.  \label{theorem.jcf}
\end{thm}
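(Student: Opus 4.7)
My plan is to induct on $n$ after a preliminary reduction. First observe that if $P$ and $A$ are both upper triangular, then the leading $i \times i$ principal submatrix of $PAP^{-1}$ equals $P_i A_i P_i^{-1}$, so the eigenspace dimensions $\dim \ker(A_i - \lambda I)$ are preserved by upper-triangular conjugation. Consequently, if I can produce any upper triangular $P$ such that $PAP^{-1}$ is in Jordan canonical form, the preserved dimension sequence automatically forces the Jordan blocks of the result to appear in decreasing size order, so it is Jordan-ordered. This reduces the problem to: given $A$ Jordan-ordered with blocks $n_1 \geq \dots \geq n_p$, find an upper triangular $P$ making $PAP^{-1}$ Jordan canonical.

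For the induction, the base case $n=1$ is trivial. Assume the result for size $<n$. The key claim for the inductive step is that the leading submatrix $A_{n-1}$ is itself Jordan-ordered, with Jordan blocks $(n_1,\dots,n_{p-1})$ when $n_p = 1$ and $(n_1,\dots,n_{p-1},n_p-1)$ when $n_p \geq 2$. The dimension sequence $\dim \ker((A_{n-1})_i - \lambda I)$ for $i \leq n-1$ equals $\dim \ker(A_i - \lambda I)$ by the Jordan-ordered hypothesis on $A$ and automatically matches the Jordan-ordered pattern for the stated blocks; the content of the claim is that the actual Jordan block sizes of $A_{n-1}$ really are those predicted. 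Writing $N = A - \lambda I$, since $(N_{n-1})^k$ is the leading submatrix of $N^k$, this reduces to showing $\dim \ker (N_{n-1})^k = \dim \ker N^k$ for $k < n_p$ and $\dim \ker (N_{n-1})^k = \dim \ker N^k - 1$ for $k \geq n_p$, which is in turn equivalent to the geometric assertion that $\ker N^k \subseteq F_{n-1}$ (where $F_i = \span(e_1,\dots,e_i)$) precisely when $k < n_p$.

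Given the claim, by the inductive hypothesis there exists an upper triangular $P' \in GL_{n-1}(K)$ with $P' A_{n-1} (P')^{-1} = J'$ in Jordan-ordered Jordan canonical form. Extending $P'$ by the identity in the last coordinate to $\tilde{P}' \in GL_n(K)$ yields
\[\tilde{P}' A (\tilde{P}')^{-1} = \begin{pmatrix} J' & c \\ 0 & \lambda \end{pmatrix}\]
for some column $c \in K^{n-1}$. Since this conjugate still has the original Jordan structure $(n_1,\dots,n_p)$, comparing $\dim \ker$ of the powers of this block matrix with those of $J'-\lambda I$ pins down the structure of $c$: when $n_p=1$, $c$ must lie in the image of $J'-\lambda I$; when $n_p \geq 2$, the components of $c$ at positions $s_1,\dots,s_{p-1}$ (with $s_i := n_1 + \dots + n_i$) vanish while the component at position $n-1$ is some nonzero $\gamma$. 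In the first case, $P = \begin{pmatrix} I_{n-1} & u \\ 0 & 1 \end{pmatrix}$ with $u$ solving $(J'-\lambda I)u = c$ kills the off-diagonal block; in the second, $P = \begin{pmatrix} I_{n-1} & u \\ 0 & \gamma \end{pmatrix}$ with $u$ solving $(J'-\lambda I)u = c - \gamma e_{n-1}$ extends the last Jordan block from size $n_p-1$ to $n_p$. In either case the composite $P \tilde{P}'$ is the desired upper triangular transformation.

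The main obstacle is the key claim of the second paragraph. The Jordan-ordered hypothesis directly constrains only the first-order data $\dim \ker(A_i - \lambda I)$, whereas the Jordan block structure of $A_{n-1}$ is determined by $\dim \ker (A_{n-1})^k$ for all $k \geq 1$. Bridging this gap — showing that the standard filtration $F_i$ and the generalized eigenspaces $\ker N^k$ interact in the precise way dictated by the Jordan-ordered block pattern — is the technical heart of the argument and is where the full strength of the Jordan-ordered hypothesis is used.
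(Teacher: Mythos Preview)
Your overall strategy—induct on $n$, apply the inductive hypothesis to $A_{n-1}$, then absorb the last column by a further upper-triangular conjugation—is exactly the paper's (the absorption step is its Lemma~\ref{jcf lemma}). Your preliminary observation that upper-triangular conjugation preserves each $\dim\ker(A_i-\lambda I)$, so that any Jordan form reached this way is automatically Jordan-ordered, is a clean simplification; the paper instead checks the block ordering by hand at the end. Your description of the absorption step is also correct and in fact sharper than the paper's: once one knows $J'$ has blocks $(n_1,\dots,n_{p-1})$ or $(n_1,\dots,n_{p-1},n_p-1)$, comparing ranks of powers forces $c_{s_1}=\cdots=c_{s_{p-1}}=0$ (and $c_{n-1}\neq 0$ in the second case), exactly as you say.

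The gap you isolate is genuine, and it is the \emph{same} gap present in the paper's own argument. To invoke induction on $A_{n-1}$ one must know that $A_{n-1}$ is itself Jordan-ordered, hence know its Jordan block sizes, and this does not follow from the eigenspace dimensions $\dim\ker(A_i-\lambda I)$ alone. The paper simply asserts the block sizes of $B$ without justification, and its Lemma~\ref{jcf lemma} opens with the claim that the Jordan form of $A$ must have the shape~\eqref{A jcf}, which is false without a Jordan-ordered hypothesis on $A$: take $B=J_2(\lambda)\oplus J_1(\lambda)$ and last column $(0,1,0)^T$; then $A$ has Jordan type $(3,1)$, not $(2,2)$ or $(2,1,1)$, and indeed no upper-triangular conjugate of this $A$ is in Jordan form. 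The claim you need is nevertheless true. One way to close it: since $N$ is strictly upper triangular, $(N_i)^k=(N^k)_i$ for all $i,k$; in particular $\dim\ker N_{n_1}=1$ forces $N_{n_1}$ to be a single Jordan block of size $n_1$, whence $N_{n-1}^{\,n_1-1}\neq 0$. Combining this with the monotonicity of the threshold $k_0=\min\{k:\ker N^k\not\subseteq F_{n-1}\}$ that you introduced, and the fact (from validity of the conjugate partition of $A_{n-1}$) that $k_0$ must equal some $n_j$, a short descent through the levels $s_1,s_2,\dots$ forces $k_0=n_p$, which is exactly your reformulation of the claim.
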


We begin with a preliminary lemma.

\begin{lemma} Suppose that $A\in M_{n+1}(K)$ has a single eigenvalue $\lambda$ of multiplicity $n$. If 
\begin{equation}A=\begin{pmatrix}   & & &a_1\\   &B& &\vdots\\  & & &a_n \\  0&\dots &0&\lambda\end{pmatrix}\end{equation}
with $B\in M_n(K)$ in Jordan canonical form, then there exists an upper triangular $P\in GL_n(K)$ such that $PAP^{-1}$ is in Jordan canonical form.\label{jcf lemma}
\end{lemma}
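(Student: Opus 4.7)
Plan: The strategy is to reduce the last column of $A$ to a canonical form via two successive upper-triangular conjugations, after which $A$ is visibly in Jordan canonical form.

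First, I would conjugate by $P_1 = \begin{pmatrix} I_n & \mathbf{v} \\ 0 & 1 \end{pmatrix}$ for a suitably chosen $\mathbf{v} \in K^n$. This fixes $B$ and sends the last column $\mathbf{a}$ to $\mathbf{a} - (B - \lambda I_n)\mathbf{v}$. Writing $N := B - \lambda I_n$, the image $\operatorname{Im} N$ equals the $K$-span of those standard basis vectors of $K^n$ which are \emph{not} at the top of a Jordan block of $B$. Letting the blocks of $B$ have sizes $n_1 \geq \cdots \geq n_p$ and top indices $j_1 < \cdots < j_p = n$, a suitable $\mathbf{v}$ reduces the last column to $\mathbf{a}' = \sum_{i=1}^p c_i e_{j_i}$.

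Next, I would use the Jordan-orderedness of $A$ (implicit in this lemma's role as a step in the inductive proof of Theorem \ref{theorem.jcf}, whose block-diagonal upper-triangular conjugations preserve Jordan-orderedness) to force $c_i = 0$ for all $i < p$. A nonzero $c_i$ with $i < p$ would produce a Jordan chain of $A$ spanning positions $j_{i-1}+1, \ldots, j_i, n+1$, skipping over positions $j_i+1, \ldots, n$ occupied by later Jordan blocks of $B$; this non-consecutive chain is incompatible with the Jordan-ordered criterion on the eigenspace dimension of each leading principal submatrix $A_k$. Hence $\mathbf{a}' = c_p e_{j_p}$.

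If $c_p = 0$, the matrix $A = B \oplus (\lambda)$ is already in Jordan canonical form. Otherwise, conjugate by the upper-triangular diagonal matrix $P_2 = \diag(1, \ldots, 1, 1/c_p)$ to normalize $c_p = 1$. A direct inspection then shows the bottom-right $(n_p+1)\times(n_p+1)$ block to equal $\begin{pmatrix} J_{n_p}(\lambda) & e_{n_p} \\ 0 & \lambda \end{pmatrix}$, which is precisely $J_{n_p+1}(\lambda)$, while the upper-left blocks of $B$ are unchanged. Hence $A$ becomes $J_{n_1}(\lambda) \oplus \cdots \oplus J_{n_{p-1}}(\lambda) \oplus J_{n_p+1}(\lambda)$, and $P = P_2 P_1$ is upper triangular.

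The main obstacle is the middle step: rigorously using the Jordan-ordered hypothesis on $A$ to pin down the shape of $\mathbf{a}'$ as a single nonzero entry at the top of the last Jordan block of $B$. This requires a careful analysis of the eigenspace dimensions of the leading principal submatrices $A_k$ and how the presence of a nonzero $c_i$ with $i < p$ would alter these dimensions so as to violate the combinatorial criterion for Jordan-orderedness.
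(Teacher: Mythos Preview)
Your plan is structurally the same as the paper's: both arguments conjugate by a unipotent upper-triangular matrix $P_1=\begin{pmatrix}I_n&\vec b\\0&1\end{pmatrix}$ to simplify the last column, then rescale by a diagonal matrix. You are also correct that the lemma as stated needs the Jordan-ordered hypothesis: without it the claim is false (take $B=J_2(\lambda)\oplus J_1(\lambda)$ and last column $(0,1,0)^T$; then $A$ has Jordan type $J_3\oplus J_1$, and one checks that no upper-triangular conjugate of $A$ is in Jordan form). The paper imports this hypothesis tacitly, by asserting at the outset that the Jordan form of $A$ is exactly \eqref{A jcf}---that is, that passing from $B$ to $A$ either extends the last Jordan block of $B$ by one or appends a new $1\times 1$ block. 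This is exactly what Jordan-orderedness of $A$ (with $B$ the Jordan-ordered canonical form of $A_n$) guarantees in the setting of Theorem~\ref{theorem.jcf}, but the lemma itself does not state it.

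The one substantive difference is in how the last column gets pinned down. You first reduce $\mathbf a$ modulo $\operatorname{Im}(B-\lambda I)$ and then want to invoke Jordan-orderedness directly to kill all coefficients $c_i$ with $i<p$; you rightly flag this as the crux. The paper instead takes the assumed Jordan form of $A$ as input and argues by an eigenvector count: knowing $\dim E_A$, it extracts $m$ linearly independent eigenvectors of $B$ that must be orthogonal to $(a_1,\dots,a_n)$, deduces that the components of $\mathbf a$ at the block-start positions of $B$ lie in a one-dimensional space, and observes that the assumed Jordan shape forces that space to be spanned by $(0,\dots,0,1)$. Only then does it solve for $\vec b$. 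So the paper's eigenvector argument is one concrete way to execute your middle step, once the Jordan-ordered hypothesis has been used (as you propose) to determine the Jordan type of $A$ in advance; your route is a bit more transparent about where that hypothesis enters.
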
 

\begin{proof}  Since $A$ has the single eigenvalue $\lambda$, the Jordan canonical form for $A$ must be
\begin{equation}\begin{pmatrix} & && &0\\& B&& &\vdots\\ &  & & &0\\   & & & &a \\ 0& \dots&\dots &0&\lambda\end{pmatrix},\label{A jcf}\end{equation}
with $a=0$ or $1$.  We give the proof when $a=1$, the case $a=0$ being similar and left to the reader.

Let $E_A$ denote the eigenspace of $A$ and suppose that $\dim E_A=m+1$.  Since $e_{n+1}\in E_A$, we can take a basis for $E_A$ containing it; moreover elementary calculations then allow us to assume that the final entry of all other basis elements is $0$.  Thus $E_A$ has a basis of the form
\[\{(0,\dots,0,1),(c_{11},\dots, c_{1n},0),\dots,(c_{m1},\dots,c_{mn},0)\}.\]
Since the last $m$ of these vectors are eigenvectors for $A$, we see that $(a_1,\dots, a_n)$ must be a solution to the system of equations
\begin{equation}
\begin{split}c_{11}x_1+\dots+c_{1n}x_n&=0\\
\vdots\hspace{.5in}& \\
c_{m1}x_1+\dots +c_{mn}x_n&=0\label{system}
\end{split}
\end{equation}
and that 
\[\{(c_{11},\dots, c_{1n}),\dots,(c_{m1},\dots,c_{mn})\}\]
is a set of $m$ linearly independent eigenvectors of $B$.  Because $a=1$ we see that the dimension of the eigenspace $E_B$ of $B$ is also $m+1$, and we note that $e_n\in E_B$.  Since $B$ is in Jordan canonical form, the matrix
\begin{equation}\begin{pmatrix} c_{11}&\dots&c_{1n}\\ \vdots& &\vdots\\c_{m1}&\dots&c_{mn}\end{pmatrix}\label{c matrix}	
\end{equation}
has $n-m-1$ of its columns equal to $0$, and its final column cannot be equal to $0$ since $e_n$ is an eigenvector for $B$.  Thus \eqref{system} can be viewed as a system of $m$ equations in $m+1$ variables, say $x_{i_1},\dots, x_{i_{m+1}}=x_n$.  Since the rows of \eqref{c matrix} are linearly independent, some subset of $m$ columns of \eqref{c matrix} is linearly independent.  Thus the solution space of \eqref{system} is $1$-dimensional.  On the other hand, since $A$ has the given Jordan canonical form, $(x_{i_1},\dots,x_{i_{m+1}})=(0,0,\dots,1)$ must be a solution to \eqref{system}.  Thus we conclude that $(a_{i_1},\dots,a_{i_{m+1}})=(0,0,\dots, c)$ for some $c\in K$.

Consider the system 
\[(\lambda I_n-B)\begin{pmatrix}y_1\\\vdots\\y_n\end{pmatrix}=\begin{pmatrix}a_1\\\vdots\\a_{n-1}\\ 0\end{pmatrix}.\]
Since $B$ is in Jordan canonical form, the image of left multiplication by $\lambda I_n-B$ has each of its $i_1,\dots,i_{m+1}$-components equal to $0$, and also has dimension $n-m-1$.  Since $a_{i_1}=\dots={a_{i_m}}=0$, we see that there is a solution $y_1=b_1,\dots, y_n=b_n$. Let $\vec{b}$ be the column vector $(b_1,\dots, b_n)^T$; then an elementary calculation shows that, if $P=\begin{pmatrix}I_n&\vec{b}\\0&1\end{pmatrix}\in GL_{n+1}(K)$, then 
\[PAP^{-1}=\begin{pmatrix} & && &0\\& B&& &\vdots\\ &  & & &0\\   & & & &c \\ 0& \dots&\dots &0&\lambda\end{pmatrix}.\]
It follows, since the Jordan canonical form for $A$ is \eqref{A jcf}, that $c\neq 0$.  Conjugating by $\diag(1,\dots,1,1/c)$ finishes the proof.\end{proof}

\begin{proof}[Proof of Theorem \ref{theorem.jcf}] We proceed by induction on $n$, the case $n=1$ being trivial.  Since $A$ is upper triangular, $e_n$ is an eigenvector for $A$.  If $A_{n-1}$ denotes the matrix obtained by deleting the last row and column from $A$, then by induction there exists an upper triangular $Q\in GL_{n-1}(K)$ such that $QA_{n-1}Q^{-1}$ is Jordan-ordered and in Jordan canonical form. Let $R=\begin{pmatrix}Q&0\\0&1\end{pmatrix}\in GL_n(K)$; conjugating $A$ by $R$ then gives
\[RAR^{-1}=\begin{pmatrix}   & & &a_1\\   &B& &\vdots\\  & & &a_n \\  0&\dots &0&\lambda\end{pmatrix},\]
where $B$ is the Jordan ordered, Jordan canonical form for $A_{n-1}$.  

Let the Jordan canonical form for $A$ have blocks of sizes $n_1 \geq \cdots \geq n_p$.  If $n_p=1$, then $B$ has blocks of sizes $n_1,\dots, n_{p-1}$, and the Jordan canonical form for $A$ is $\begin{pmatrix} B&0\\0&\lambda\end{pmatrix}$.  By Lemma \ref{jcf lemma}, there is an upper triangular $T\in GL_n(K)$ with $TRAR^{-1}T^{-1}$ Jordan-ordered and in Jordan canonical form.  Thus the theorem follows with $P=TR$ in this case.

If $n_p>1$, then $B$ has blocks of sizes $n_1,\dots, n_{p-1},n_p-1$ and the block of size $n_p-1$ occurs at the bottom of $B$. Thus the Jordan canonical form for $A$ is 
\[\begin{pmatrix} & && &0\\& B&& &\vdots\\ &  & & &0\\   & & & &1 \\ 0& \dots&\dots &0&\lambda\end{pmatrix},\]
and again letting $T$ be as in Lemma \ref{jcf lemma}, we see that $PAP^{-1}$ is Jordan-ordered and in Jordan canonical form for $P=TR$.  
\end{proof}

Combining Theorems \ref{upper triangular thm} and \ref{theorem.jcf}, we can state a sufficient condition for a homomorphism $\phi:K\rightarrow M_n(K)$ to be equivalent to an upper triangular homomorphism which is generalized upper triangular Toepliz.  We state the result in the case where $\phi$ is $a$-homogenous for some $a:K\rightarrow K$.

\begin{cor}  Let $\phi$ be $a$-homogeneous, and let $\psi\sim \phi$, where $\psi$ is a homomorphism in the form \eqref{upper triangular equation}.  If $\psi(y)$ is Jordan-ordered for some $y\in K$, then 
\begin{equation}
\phi\sim \begin{pmatrix} T_{11}&T_{12}&\dots& T_{1s}\\0&T_{22}&\dots&T_{2s}\\\vdots&\ddots&\ddots&\vdots\\0&\dots&0&T_{ss}\end{pmatrix}
\end{equation}
where each $T_{ij}(x)$ is generalized upper triangular Toepliz.\label{jcf cor}
\end{cor}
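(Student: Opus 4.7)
The plan is to chain together Theorems \ref{upper triangular thm}, \ref{theorem.jcf}, and \ref{Gantmacher thm}, using the hypothesis on $\psi(y)$ to bridge from block upper triangular to Jordan canonical form, and then exploit upper triangularity to kill the lower blocks that Theorem \ref{Gantmacher thm} would otherwise allow.

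First, I would use the hypothesis that $\phi\sim\psi$ with $\psi$ already in the form \eqref{upper triangular equation}, so that $\psi(x)$ is upper triangular for all $x\in K$ and every diagonal entry of $\psi(x)$ equals $a(x)$. In particular $\psi(y)$ is upper triangular with single eigenvalue $a(y)$, and it is Jordan-ordered by assumption. Theorem \ref{theorem.jcf} then produces an upper triangular $P\in GL_n(K)$ such that $P\psi(y)P^{-1}$ is Jordan-ordered and in Jordan canonical form. Because $P$ is upper triangular and each $\psi(x)$ is upper triangular, $P\psi(x)P^{-1}$ remains upper triangular for every $x\in K$. Moreover, since $\psi$ is a homomorphism, $\psi(x)$ commutes with $\psi(y)$, and hence $P\psi(x)P^{-1}$ commutes with the Jordan-ordered JCF $P\psi(y)P^{-1}$.

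Next I would invoke Theorem \ref{Gantmacher thm} applied to the family $\{P\psi(x)P^{-1}:x\in K\}$ with distinguished element $P\psi(y)P^{-1}$. Since $\psi(y)$ has a single eigenvalue $a(y)$, the ``$t=1$'' case applies and we obtain that each $P\psi(x)P^{-1}$ is an $s\times s$ block matrix $(T_{pq}(x))$ indexed by the Jordan blocks of $P\psi(y)P^{-1}$, with $T_{pq}(x)$ a generalized upper triangular Toepliz matrix of size $n_p\times n_q$.

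Finally, I need to check that upper triangularity of $P\psi(x)P^{-1}$ forces $T_{pq}=0$ whenever $p>q$; this is the main (and only nontrivial) step. Here the Jordan-ordered assumption does the work: since $n_1\geq n_2\geq\cdots\geq n_s$, the $(p,q)$ block with $p>q$ occupies rows $\sum_{i<p}n_i+1,\dots,\sum_{i\le p}n_i$ and columns $\sum_{i<q}n_i+1,\dots,\sum_{i\le q}n_i$, and one verifies directly that $\sum_{i<p}n_i\ge \sum_{i\le q}n_i$, so every entry of the block lies strictly below the main diagonal of the full matrix and must vanish. For $p\le q$ the block lies (weakly) above the main diagonal, so no further constraint is imposed and the generalized upper triangular Toepliz form from Theorem \ref{Gantmacher thm} is the desired shape. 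Combining, $P\psi P^{-1}$ has exactly the form displayed in the statement of the corollary, and since $\phi\sim\psi\sim P\psi P^{-1}$, the conclusion follows.
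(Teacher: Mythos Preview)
Your proof is correct and is exactly the argument the paper intends: the corollary is stated without proof, immediately after the sentence ``Combining Theorems \ref{upper triangular thm} and \ref{theorem.jcf}, we can state a sufficient condition\dots,'' and your chaining of Theorem \ref{theorem.jcf} (to get an upper triangular $P$ putting $\psi(y)$ in Jordan canonical form), then Theorem \ref{Gantmacher thm} (to get the generalized Toepliz block structure), then the upper-triangularity of $P\psi P^{-1}$ (to kill the sub-diagonal blocks) is precisely what is meant. One small imprecision: for $p=q$ with $n_p>1$ the diagonal block does not lie weakly above the main diagonal, but this is harmless since the Gantmacher form already makes that block upper triangular Toepliz, hence upper triangular; your conclusion that ``no further constraint is imposed'' remains valid.
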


In particular there exist higher derivations $\textbf{d}_1,\dots,\textbf{d}_s$ such that $T_{ii}=\phi(\textbf{d}_i)$, although these derivations may be different than those in \eqref{upper triangular equation}.

\end{document}